\setlist[itemize]{leftmargin=2em}
\setlist[enumerate]{leftmargin=2em}
\newtheorem{theorem}{Theorem}[section]
\newtheorem{lemma}[theorem]{Lemma}
\newtheorem{corollary}[theorem]{Corollary}
\newtheorem{proposition}[theorem]{Proposition}
\theoremstyle{definition}
\newtheorem{definition}[theorem]{Definition}
\newtheorem{example}[theorem]{Example}
\newtheorem{remark}[theorem]{Remark}
\newcolumntype{C}[1]{>{\centering\arraybackslash}m{#1}}
\newcommand{\csff}{\mathbf{X}_F}
\newcommand{\csft}{\mathbf{X}_T}
\newcommand{\stfrak}{\mathfrak{st}}
\newcommand{\lmulti }{\{\!\!\{}
\newcommand{\rmulti}{\}\!\!\}}
\newcommand{\sort}{\mathrm{sort}}
\newcommand{\lc}{\lambda_\mathrm{LC}}
\newcommand{\lead}{\lambda_\mathrm{lead}}
\newcommand{\calit}{\mathcal{I}_T}
\definecolor{darkblue}{rgb}{0.0,0,0.7} 
\definecolor{darkred}{rgb}{0.7,0,0} 
\definecolor{darkgreen}{rgb}{0, .6, 0} 
\newcommand{\defncolor}{\color{darkred}}
\newcommand{\defn}[1]{{\defncolor\emph{#1}}} 
\title[Tree Reconstruction]{On the reconstruction of trees from their chromatic symmetric functions}
\author[Gonzalez, Orellana, and Tomba]{Michael Gonzalez \addressmark{1}\thanks{\href{mailto:mikegonz1130@gmail.com}{mikegonz1130@gmail.com}}, Rosa Orellana \addressmark{1}\thanks{\href{mailto:Rosa.C.Orellana@Dartmouth.edu}{Rosa.C.Orellana@Dartmouth.edu}, partially supported by NSF grant DMS-2153998} \and Mario Tomba\addressmark{1}\thanks{\href{mailto:mario.tomba.morillo.25@dartmouh.edu}{mario.tomba.morillo.25@dartmouh.edu}} }
\address{\addressmark{1}Mathematics Department, Dartmouth College, Hanover, NH 03755, U.S.A.}
\abstract{
We study Stanley's chromatic symmetric function (CSF) for trees when expressed in the star basis.  We use the deletion-near-contraction (DNC) algorithm recently introduced in \cite{ADOZ} to compute coefficients that occur in the CSF in the star basis. In particular, one of our main results determines the smallest partition in lexicographic order that occurs as an indexing partition in the CSF, and we also give a formula for its coefficient. In addition to describing properties of trees encoded in the coefficients of the star basis, we give an algorithm for reconstructing trees of diameter less than six. 
}
\keywords{symmetric chromatic polynomial, trees, graph coloring}
\begin{document}

\maketitle

\section{Introduction}
In 1995, Stanley \cite{stanley95symmetric} introduced the \emph{chromatic symmetric function} (CSF) for a finite, simple graph $G$, $\mathbf{X}_G$. The CSF is a multivariate generalization of the chromatic polynomial of a graph that counts the number or proper colorings of the vertices of $G$. This function has garnered significant interest, \cite{aliste2014proper, crew2020deletion, Crew-distinguishing, DSvW-epos,heil2019algorithm, loebl2018isomorphism, martin2008distinguishing, GS-noncom, LW-rootedversion}. This interest is partly due to the   
 \emph{Tree Isomorphism Conjecture}, which states that the CSF distinguishes non-isomorphic trees. This conjecture is known to hold for trees with less than 30 vertices  \cite{heil2019algorithm} and it has been proved for several subclasses of trees \cite{ADOZ,aliste2014proper,loebl2018isomorphism,
martin2008distinguishing}. 

In a recent paper, Aliste-Prieto, De-Mier, Zamora and the second author \cite{ADOZ} introduced the \emph{Deletion-Near-Contraction} relation (DNC relation), which leads to an algorithm for writing $\mathbf{X}_G$ in the star basis $\{\mathfrak{st}_\lambda: \lambda \vdash n\}$ introduced by Cho and van Willigenburg \cite{chromBases}.  For $T$ a tree we have:  
 \[\mathbf{X}_T = \sum_{\lambda\vdash n} c_\lambda \mathfrak{st}_\lambda,\]
 where $\lambda\vdash n$ denotes that $\lambda$ is a partition of $n$.

  The results in this abstract are as follows: (1) we give formulas for some of the coefficients $c_\lambda$; (2) we determine the smallest partition $\lambda$ in lexicographic order such that $c_\lambda \neq 0$ and compute this coefficient, we call this partition the \emph{leading partition} of $\csft$; (3) we give a reconstruction algorithm for trees of diameter less than 6. We believe these results lay the groundwork for an inductive reconstruction algorithm that could hopefully prove the Tree Isomorphism Conjecture. Our approach is novel in that it provides an explicit reconstruction directly from $\mathbf{X}_T$. Additionally, we present a new, much shorter proof for the diameter-five case, different from the proof in \cite{GOT-CSF}.

This paper is organized as follows: In Section 2, we review basic definitions. In Section 3, we review the deletion-near-contraction relation, which allows us to efficiently compute the CSF in the star basis. In addition, we present some results about the coefficients that occur. In Section 4, we present our main result about the leading partition and its coefficient. In Section 5, we prove that the Tree Isomorphism Conjecture is true for all trees with a diameter of less than 6.

    \section{Graphs and the Chromatic Symmetric Function} For basic graph theory background see \cite{westBook}.  Here we give some basic definitions and set the notation.  We are interested in finite, simple graphs. A graph $G = (V,E)$ is defined via $V$, the set of vertices, and $E$ the set of edges. We say that $|V|$ is the \defn{order} of $G$. In this abstract we further restrict to the case when $G$ is a \defn{tree}, an acyclic, connected graph. A \defn{forest} is the disjoint union of trees. Every edge is determined by its endpoints,  $e = uv$, where $u,v\in V$. The \defn{degree} of a vertex $v$ is the number of edges having $v$ as an endpoint.  A \defn{leaf} is a vertex of degree 1, and a \defn{leaf edge} is an edge incident to a leaf.  
    
    In this abstract we need a few more non-standard definitions.  An edge $e=uv$ is called an \defn{internal edge} if $u$ and $v$ have degree $\geq 2$. We use \defn{$I(T)$} for the set of internal edges of $T$. An \defn{internal vertex} is a vertex that is not a leaf. An internal vertex is called \defn{deep} if it is not the endpoint of a leaf edge.  In the example below, $u$ and $w$ are internal vertices, and $u$ is deep, $e$ is an internal edge and $\ell$ is a leaf edge and $v$ is a leaf.  

\centerline{
        \begin{tikzpicture}[auto=center,every node/.style={circle, fill=black, scale=0.5}]
            \node (a) at (1,0) {};
            \node (b) at (0,.5) {};
            \node (c) at (0,-.5) {};
            \node (d) at (-1,.5) {};
            \node (e) at (-1,-.5) {};
            \node (f) at (2, .5) {};
            \node (g) at (2, -.5) {};
            \node (h) at (3, .5) {};

            \draw[thick] (a)--(b);
            \draw[thick] (a)--(c);
            \draw[thick] (b)--(d);
            \draw[thick] (c)--(e);
            \draw[thick] (f)--(h);
            \draw[thick] (g)--(a)--(f);

            \node[fill=none, scale=1.75] at (-.5, .7) {$\ell$};
            \node[fill=none, scale=1.75] at (-1.2, .6) {$v$};
            \node[fill=none, scale=1.75] at (1, 0.2) {$u$};
            \node[fill=none, scale=1.75] at (2, 0.7) {$w$};
            \node[fill=none, scale=1.75] at (.65, -0.35) {$e$};
        \end{tikzpicture}
}

A \defn{proper} coloring of a graph $G$ with vertex set $\{v_1, \ldots, v_n\}$ is a function $f:V\rightarrow \mathbb{Z}_{\geq 0}$, such that if $e = uv \in E$, then $f(u)\neq f(v)$. In \cite{stanley95symmetric}, Stanley introduced the \defn{chromatic symmetric function}:
\[\mathbf{X}_G = \sum_{f:V\rightarrow \mathbb{Z}_{>0}}x_{f(v_1)}\cdots x_{f(v_n)}\]
where the sum is over all proper colorings of $G$ and $x_1, x_2 \ldots$ are commuting variables. $\mathbf{X}_G$ is a homogeneous, symmetric function of degree $n$, the number of vertices. Let $\mathsf{Sym}_n$ be the vector space of homogeneous, symmetric functions of degree $n$. $\mathsf{Sym}_n$ has dimension \defn{$p(n)$} which is equal to the number of partitions of $n$. Recall that a \defn{partition} of $n$, $\lambda=(\lambda_1, \ldots, \lambda_k)$, is a multiset of positive integers written in  weakly decreasing order and $\sum_{i=1}^k\lambda_i= n$, denoted by $\lambda\vdash n$. The \defn{length} of $\lambda$ is $\ell(\lambda)=k$. Let $p_r = x_1^r+ x_2^r+ \cdots$ for any $r\in \mathbb{Z}_{>0}$ and $p_\lambda = p_{\lambda_1}\cdots p_{\lambda_k}$. The \defn{power sum} basis $\{p_\lambda\, |\, \lambda\vdash n\}$ is a linear basis for $\mathsf{Sym}_n$. In \cite{stanley95symmetric} Theorem 2.5, Stanley wrote $\mathbf{X}_G$ in the power basis, we do not include this here as we do not need it. But we give the power expansion for the \defn{star graph} $St_{n}$, the tree with one vertex of degree $n-1$ and $n-1$ vertices of degree 1.
\begin{equation*}
    \label{eq:star_p_exp}
\mathfrak{st}_{n}:= \mathbf{X}_{St_n} = \sum_{r=0}^{n-1}(-1)^r\binom{n-1}{r}p_{(r+1,1^{n-r-1})}
\end{equation*}
More generally, if $G$ is a forest of stars, we write $G =  St_{\lambda_1}\sqcup \cdots \sqcup St_{\lambda_k}$, where $\sqcup$ is the disjoint union of graphs. We write $St_{\lambda}$ for the forest of stars with connected components of orders $\lambda_i$, for all $i$. We define \defn{$\mathfrak{st}_\lambda := \mathbf{X}_{St_\lambda}= \stfrak_{\lambda_1}\cdots \stfrak_{\lambda_k}$}.  Here is $St_{9,2,1}$

\centerline{
\begin{tikzpicture}
[scale = 0.3,thick, baseline={(0,-1ex/2)}]
\tikzstyle{vertex} = [black, shape = circle, fill=black, minimum size = 3.5pt, inner sep = 1pt]
\node[vertex] (G7) at (12.0, -1.75) [shape = circle, draw] {};
\node[vertex] (G6) at (11.3, 0) [shape = circle, draw] {};
\node[vertex] (G5) at (12.0, 1.75) [shape = circle, draw] {};
\node[vertex] (G4) at (14.0, 2.5) [shape = circle, draw] {};
\node[vertex] (G3) at (16.0, 1.75) [shape = circle, draw] {};
\node[vertex] (G8) at (16.0, -1.75) [shape = circle, draw] {};
\node[vertex] (G2) at (14.0, -2.5) [shape = circle, draw] {};
\node[vertex] (G1) at (16.7, 0) [shape = circle, draw] {};
\node[vertex] (G0) at (14.0, 0) [shape = circle, draw] {};
\draw[thick] (G0) -- (G1);
\draw[thick] (G0) -- (G2);
\draw[thick] (G0) -- (G3);
\draw[thick] (G0) -- (G4);
\draw[thick] (G0) -- (G5);
\draw[thick] (G0) -- (G6);
\draw[thick] (G0) -- (G7);
\draw[thick] (G0) -- (G8);
\node[vertex] (G10) at (19,-1.75) [shape=circle,draw ] {};
\node[vertex] (G11) at (19,1.75) [shape=circle,draw ] {};
\node[vertex] (G12) at (21,0) [shape=circle,draw ] {};
\draw[thick] (G10) -- (G11);
\end{tikzpicture}}

\label{fig:st9}

Cho and van Willigenburg showed \cite{chromBases} that $\{\mathfrak{st}_\lambda \, |\, \lambda \vdash n\}$ is a basis of $\mathsf{Sym}_n$ called the \defn{star basis}. In the following section, we give a relation and an algorithm for writing $\mathbf{X}_G$ in the star basis. 

    \section{Deletion-Near-Contraction}
    In this section, we review the \defn{deletion-near-contraction} (DNC) relation, recently introduced in \cite{ADOZ}. This relation is a modification of the classical deletion-contraction relation used to compute the chromatic polynomial of any graph.
\begin{itemize}
    \item {\bf Deletion:} This is the classic deletion of an edge in a graph. Given a graph $G$, we denote the resulting graph obtained by deleting an edge $e$ by $G \setminus e$.
 \item \textbf{Leaf-contraction:} Given a graph $G$ and an edge $e$ in $G$, the \defn{leaf-contracted graph}, $G\odot e$, is obtained by contracting $e$ and attaching a leaf $\ell_e$ to the vertex that results from the contraction of $e$. 
    \item \textbf{Dot-contraction:} Given an edge $e$, the \defn{dot-contracted graph}, $(G\odot e)\setminus \ell_e$, is obtained by contracting the edge $e$ and adding an isolated vertex, $v$, to the resulting graph. This can be formulated in terms of the leaf-contraction operation as simply removing the edge $\ell_e$. 
\end{itemize}

See Example \ref{exa:DNC-tree} for an illustration of these operations. In \cite{ADOZ}, the authors proved that the CSF satisfies a relation involving the three operations defined above. 
\begin{proposition}[\cite{ADOZ}](The deletion-near-contraction relation or DNC relation) \label{prop:dnc-formula}
    For a simple graph $G$ and any edge $e$ in $G$, we have
    \[
    \mathbf{X}_G = \mathbf{X}_{G \setminus e} - \mathbf{X}_{(G \odot e)\setminus \ell_e} + \mathbf{X}_{G \odot e}
    \]
\end{proposition}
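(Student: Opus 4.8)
The plan is to argue directly from the definition of $\mathbf{X}_G$ as a generating function over proper colorings, imitating the classical deletion--contraction proof but tracking monomials carefully so that every term has degree $n$. I would write $e = uv$, let $n = |V(G)|$, and let $G/e$ denote the contraction of $e$, with $w$ the vertex obtained by identifying $u$ and $v$; note $G/e$ has $n-1$ vertices, so $\mathbf{X}_{G/e}\in \mathsf{Sym}_{n-1}$. (If contracting $e$ creates parallel edges, we pass to the underlying simple graph, which does not change any of the CSFs involved since proper colorings only see adjacency.)

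First I would split the proper colorings of $G\setminus e$ according to whether the endpoints of $e$ receive the same color. A proper coloring $f$ of $G\setminus e$ with $f(u)\neq f(v)$ is exactly a proper coloring of $G$, while the colorings with $f(u)=f(v)$ correspond bijectively to proper colorings $g$ of $G/e$ via $g(w):=f(u)=f(v)$ and $g=f$ elsewhere; under this bijection the monomials satisfy $x_f = x_{g(w)}\,x_g$, since $f$ assigns the common color to the two vertices $u,v$ whereas $g$ assigns it to the single vertex $w$. Summing over all colorings gives
\[
\mathbf{X}_{G\setminus e} = \mathbf{X}_G + \sum_{g} x_{g(w)}\,x_g,
\]
where $g$ ranges over proper colorings of $G/e$.

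Next I would evaluate the two near-contraction terms. The graph $(G\odot e)\setminus \ell_e$ is $G/e$ together with one isolated vertex, so its proper colorings are a proper coloring $g$ of $G/e$ together with an arbitrary color on the isolated vertex; hence $\mathbf{X}_{(G\odot e)\setminus \ell_e} = p_1\cdot \mathbf{X}_{G/e}$. The graph $G\odot e$ is $G/e$ with a pendant leaf $\ell_e$ attached at $w$, so its proper colorings are pairs $(g,c)$ with $g$ a proper coloring of $G/e$ and $c\neq g(w)$; therefore
\[
\mathbf{X}_{G\odot e} = \sum_g x_g\bigl(p_1 - x_{g(w)}\bigr) = p_1\cdot\mathbf{X}_{G/e} - \sum_g x_{g(w)}\,x_g = \mathbf{X}_{(G\odot e)\setminus \ell_e} - \sum_g x_{g(w)}\,x_g.
\]
Combining this with the previous display eliminates the auxiliary sum $\sum_g x_{g(w)}x_g$ and yields $\mathbf{X}_G = \mathbf{X}_{G\setminus e} - \mathbf{X}_{(G\odot e)\setminus \ell_e} + \mathbf{X}_{G\odot e}$.

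The only real subtlety, and the step I would be most careful about, is the monomial bookkeeping in the bijection between colorings of $G\setminus e$ with $f(u)=f(v)$ and colorings of $G/e$: one must check that the extra factor $x_{g(w)}$ is supplied by exactly the near-contraction terms, and that the isolated (resp.\ pendant) vertex in those terms is precisely what contributes the missing degree so that everything lands in $\mathsf{Sym}_n$ rather than $\mathsf{Sym}_{n-1}$. Everything else is a routine manipulation of generating functions.
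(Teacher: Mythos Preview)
Your argument is correct. The paper itself does not supply a proof of this proposition: it is quoted from \cite{ADOZ} and used as a black box, so there is no in-paper proof to compare against. Your route---splitting proper colorings of $G\setminus e$ by whether the endpoints of $e$ agree, identifying the ``agree'' part with $\sum_g x_{g(w)}x_g$ over proper colorings $g$ of $G/e$, and then recognizing that exactly this sum equals $\mathbf{X}_{(G\odot e)\setminus\ell_e}-\mathbf{X}_{G\odot e}$---is a clean, self-contained derivation directly from the definition of $\mathbf{X}_G$, and the remark about passing to the underlying simple graph after contraction is the right way to dispose of any parallel-edge issue.
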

If $e$ is a leaf edge, $\mathbf{X}_G = \mathbf{X}_{G \odot e}$ and $\mathbf{X}_{G \setminus e} = \mathbf{X}_{(G \odot e)\setminus \ell_e}$. Hence, we only apply it to internal edges.  In \cite{ADOZ}, the authors show that we can recursively apply the DNC relation on internal edges until $\mathbf{X}_G$ can be written as a linear combination of $\mathbf{X}_H$, where $H$ is a forest of star graphs. Stars do not have internal edges. This process is formalized in the Star-Expansion Algorithm presented in \cite{ADOZ}. We call an output of the star-expansion algorithm a \defn{DNC tree} for $G$, \defn{$\mathcal{T}(G)$}. Every time we apply the DNC relation we obtain forests with fewer internal edges. This leads us to an algorithm that gives the expansion of $\mathbf{X}_G$ in the star basis. For details about this algorithm see \cite{GOT-CSF} Section 3. 

\begin{example}\label{exa:DNC-tree}
    Figure \ref{fig:dnc-tree} shows an example of how to apply the star-expansion algorithm. We use red to indicate the internal edge on which we apply the DNC relation. In particular, it says that for the graph $T$ at the root, we have \[\csft = 
    -\mathfrak{st}_{(4,2,1)}+ \mathfrak{st}_{(4,3)}+\mathfrak{st}_{(5,1,1)}+\mathfrak{st}_{(5,2)}-2\mathfrak{st}_{(6,1)}+\mathfrak{st}_{(7)}~.\]
    \centering
    \begin{tikzpicture}[auto=center,every node/.style={circle, fill=black, scale=0.35}, style=thick, scale=0.4] \label{tikz:dnc-tree}
    \filldraw[black] (0, 0) coordinate (A1) circle (4pt) node{};
    \filldraw[black] (1, 0) coordinate (A2) circle (4pt) node{};
    \filldraw[black] (1, 1) coordinate (A3) circle (4pt) node{};
    \filldraw[black] (1, -1) coordinate (A4) circle (4pt) node{};
    \node (A5) at (2,0) {};
    \node (A6) at (3,0) {};
    \filldraw[black] (4, 0) coordinate (A7) circle (4pt) node{};
    
    \draw(A1) -- (A2);
    \draw(A2) -- (A3);
    \draw(A2) -- (A4);
    \draw(A2) -- (A5);
    \draw[color=red](A5) -- (A6);
    \draw(A6) -- (A7);

    \filldraw[black] (-13, -5) coordinate (A1) circle (4pt) node{};
    \filldraw[black] (-12, -5) coordinate (A2) circle (4pt) node{};
    \filldraw[black] (-12, -4) coordinate (A3) circle (4pt) node{};
    \filldraw[black] (-12, -6) coordinate (A4) circle (4pt) node{};
    \filldraw[black] (-11, -5) coordinate (A5) circle (4pt) node{};
    \filldraw[black] (-10, -5) coordinate (A6) circle (4pt) node{};
    \filldraw[black] (-9, -5) coordinate (A7) circle (4pt) node{};
    
    \draw(A1) -- (A2);
    \draw(A2) -- (A3);
    \draw(A2) -- (A4);
    \draw(A2) -- (A5);
    \draw(A6) -- (A7);
    
    \filldraw[black] (0.5, -5) coordinate (A1) circle (4pt) node{};
    \node (A2) at (1.5,-5) {};
    \filldraw[black] (1.5, -4) coordinate (A3) circle (4pt) node{};
    \filldraw[black] (1.5, -6) coordinate (A4) circle (4pt) node{};
    \node (A5) at (2.5,-5) {};
    \filldraw[black] (3.5, -5) coordinate (A6) circle (4pt) node{};
    \filldraw[black] (2.5, -4) coordinate (A7) circle (4pt) node{};
    
    \draw(A1) -- (A2);
    \draw(A2) -- (A3);
    \draw(A2) -- (A4);
    \draw[red](A2) -- (A5);
    \draw(A5) -- (A6);

    \filldraw[black] (13, -5) coordinate (A1) circle (4pt) node{};
    \node (A2) at (14,-5) {};
    \filldraw[black] (14, -4) coordinate (A3) circle (4pt) node{};
    \filldraw[black] (14, -6) coordinate (A4) circle (4pt) node{};
    \node (A5) at (15,-5) {};
    \filldraw[black] (16, -5) coordinate (A6) circle (4pt) node{};
    \filldraw[black] (15, -4) coordinate (A7) circle (4pt) node{};

    \draw(A1) -- (A2);
    \draw(A2) -- (A3);
    \draw(A2) -- (A4);
    \draw[red](A2) -- (A5);
    \draw(A5) -- (A7);
    \draw(A5) -- (A6);

    \filldraw[black] (-4.5, -10) coordinate (A1) circle (4pt) node{};
    \filldraw[black] (-3.5, -10) coordinate (A2) circle (4pt) node{};
    \filldraw[black] (-3.5, -9) coordinate (A3) circle (4pt) node{};
    \filldraw[black] (-3.5, -11) coordinate (A4) circle (4pt) node{};
    \filldraw[black] (-2.5, -10) coordinate (A5) circle (4pt) node{};
    \filldraw[black] (-1.5, -10) coordinate (A6) circle (4pt) node{};
    \filldraw[black] (-2.5, -9) coordinate (A7) circle (4pt) node{};
    
    \draw(A1) -- (A2);
    \draw(A2) -- (A3);
    \draw(A2) -- (A4);
    \draw(A5) -- (A6);
    
    \filldraw[black] (0.5, -10) coordinate (A1) circle (4pt) node{};
    \filldraw[black] (1.5, -10) coordinate (A2) circle (4pt) node{};
    \filldraw[black] (1.5, -9) coordinate (A3) circle (4pt) node{};
    \filldraw[black] (1.5, -11) coordinate (A4) circle (4pt) node{};
    \filldraw[black] (2.5, -10) coordinate (A5) circle (4pt) node{};
    \filldraw[black] (2.5, -11) coordinate (A6) circle (4pt) node{};
    \filldraw[black] (2.5, -9) coordinate (A7) circle (4pt) node{};
    
    \draw(A1) -- (A2);
    \draw(A2) -- (A3);
    \draw(A2) -- (A4);
    \draw(A2) -- (A5);
    
    \filldraw[black] (4.5, -10) coordinate (A1) circle (4pt) node{};
    \filldraw[black] (5.5, -10) coordinate (A2) circle (4pt) node{};
    \filldraw[black] (5.5, -9) coordinate (A3) circle (4pt) node{};
    \filldraw[black] (5.5, -11) coordinate (A4) circle (4pt) node{};
    \filldraw[black] (6.5, -10) coordinate (A5) circle (4pt) node{};
    \filldraw[black] (6.5, -11) coordinate (A6) circle (4pt) node{};
    \filldraw[black] (6.5, -9) coordinate (A7) circle (4pt) node{};
    
    \draw(A1) -- (A2);
    \draw(A2) -- (A3);
    \draw(A2) -- (A4);
    \draw(A2) -- (A5);
    \draw(A2) -- (A6);
    
    \filldraw[black] (8.5, -10) coordinate (A1) circle (4pt) node{};
    \filldraw[black] (9.5, -10) coordinate (A2) circle (4pt) node{};
    \filldraw[black] (9.5, -9) coordinate (A3) circle (4pt) node{};
    \filldraw[black] (9.5, -11) coordinate (A4) circle (4pt) node{};
    \filldraw[black] (10.5, -10) coordinate (A5) circle (4pt) node{};
    \filldraw[black] (11.5, -10) coordinate (A6) circle (4pt) node{};
    \filldraw[black] (10.5, -9) coordinate (A7) circle (4pt) node{};
    
    \draw(A1) -- (A2);
    \draw(A2) -- (A3);
    \draw(A2) -- (A4);
    \draw(A5) -- (A7);
    \draw(A5) -- (A6);
    
    \filldraw[black] (13.5, -10) coordinate (A1) circle (4pt) node{};
    \filldraw[black] (14.5, -10) coordinate (A2) circle (4pt) node{};
    \filldraw[black] (14.5, -9) coordinate (A3) circle (4pt) node{};
    \filldraw[black] (14.5, -11) coordinate (A4) circle (4pt) node{};
    \filldraw[black] (15.5, -10) coordinate (A5) circle (4pt) node{};
    \filldraw[black] (15.5, -11) coordinate (A6) circle (4pt) node{};
    \filldraw[black] (15.5, -9) coordinate (A7) circle (4pt) node{};
    
    \draw(A1) -- (A2);
    \draw(A2) -- (A3);
    \draw(A2) -- (A4);
    \draw(A2) -- (A6);
    \draw(A2) -- (A7);
    
    \filldraw[black] (17.5, -10) coordinate (A1) circle (4pt) node{};
    \filldraw[black] (18.5, -10) coordinate (A2) circle (4pt) node{};
    \filldraw[black] (18.5, -9) coordinate (A3) circle (4pt) node{};
    \filldraw[black] (18.5, -11) coordinate (A4) circle (4pt) node{};
    \filldraw[black] (19.5, -10) coordinate (A5) circle (4pt) node{};
    \filldraw[black] (19.5, -11) coordinate (A6) circle (4pt) node{};
    \filldraw[black] (19.5, -9) coordinate (A7) circle (4pt) node{};
    
    \draw(A1) -- (A2);
    \draw(A2) -- (A3);
    \draw(A2) -- (A4);
    \draw(A2) -- (A5);
    \draw(A2) -- (A6);
    \draw(A2) -- (A7);

    \draw(2,-1.5) -- (-9,-3.5);
    
    \node[fill=none] at (-3.5, -2.15) {\huge $+$};
    \draw (2,-1.5) -- (2,-3.5);
    \node[fill=none] at (1.55, -2.35) {\huge $-$};
    \draw (2,-1.5) -- (13,-3.5);
    \node[fill=none] at (7.25, -2.15) {\huge $+$};
    
    \draw [fill=none](2,-6.5) -- (-2,-8.5);

    \node[fill=none] at (-0.5, -7.25) {\huge $+$};
    
    \draw(2,-6.5) -- (2,-8.5);
    
    \node[fill=none] at (1.55, -7.45) {\huge $-$};
    
    \draw (2,-6.5) -- (6,-8.5);

    \node[fill=none] at (4.5, -7.25) {\huge $+$};

    \draw (14.5,-6.5) -- (10.5,-8.5);
    
    \node[fill=none] at (12, -7.25) {\huge $+$};
    
    \draw (14.5,-6.5) -- (14.5,-8.5);

    \node[fill=none] at (13.95, -7.5) {\huge $-$};
    
    \draw (14.5,-6.5) -- (18.5,-8.5);

    \node[fill=none] at (17, -7.25) {\huge $+$};
    \end{tikzpicture}
    \label{fig:dnc-tree}

\end{example}
\begin{remark}\label{remark:paths-coefficients}
    If $G$ has $n$ vertices and $\mathbf{X}_G = \sum_{\lambda\vdash n} c_\lambda \stfrak_\lambda$ is written in the star basis, then 
    $c_\lambda = (-1)^{m} |\mathcal{S}_\lambda|,$
    where $\mathcal{S}_\lambda$ is the set of paths in a DNC tree $\mathcal{T}(G)$ that end in a star forest $H$ whose connected components' orders are given by the parts in $\lambda$, and $m$ is the number of dot-contractions performed throughout any of these paths. 
\end{remark}

Remark \ref{remark:paths-coefficients} allows us to compute coefficients using combinatorial arguments for the number of paths in a DNC tree. For example, in the case of hook partitions.  
\begin{proposition}\cite[Proposition 3.8]{GOT-CSF}\label{prop:hook-coeff}
   Let $T$ be a tree of order $n$ and $I(T)$ the set of internal edges of $T$. If 
     $\csft = \sum_{\lambda\vdash n} c_\lambda \mathfrak{st}_\lambda$, then 
    $c_{(n-m,1^m)} = (-1)^m \binom{\#I(T)}{m}.$ In particular, $|c_{(n-1,1)}|=\#I(T)$.
\end{proposition}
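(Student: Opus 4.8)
The plan is to prove the formula by induction on $n+\#I(T)$, where $n=|V(T)|$, by applying the DNC relation (Proposition~\ref{prop:dnc-formula}) to an \emph{arbitrary} internal edge $e=uv$ of $T$; no clever choice of $e$ is needed. If $\#I(T)=0$ then $T$ is a star, $\csft=\stfrak_{(n)}$, and the formula reads $c_{(n-m,1^m)}=[m=0]=(-1)^m\binom{0}{m}$, which is the base case. For the inductive step one tracks the coefficient of $\stfrak_{(n-m,1^m)}$ through the three terms $\mathbf{X}_{T\setminus e}$, $\mathbf{X}_{(T\odot e)\setminus\ell_e}$, $\mathbf{X}_{T\odot e}$.

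First I would record the elementary bookkeeping. Since $e$ is internal, contracting it creates a vertex of degree $\deg(u)+\deg(v)-2\ge 2$; no former leaf edge becomes internal, and the internal edges of $T$ other than $e$ are in bijection with those of $T/e$, so $\#I(T/e)=\#I(T)-1$; reattaching a leaf at the merged vertex changes nothing, so $\#I(T\odot e)=\#I(T)-1$ as well. Moreover $(T\odot e)\setminus\ell_e=(T/e)\sqcup St_1$, and $T\setminus e=T_1\sqcup T_2$ with each $T_i$ a tree on at least two vertices (each endpoint of $e$ keeps a neighbor). I would also recall $\stfrak_\mu\,\stfrak_\nu=\stfrak_{\mu\cup\nu}$ and multiplicativity of the CSF over disjoint unions.

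Now the three contributions. \emph{Leaf-contraction:} $T\odot e$ is a tree on $n$ vertices with $\#I(T)-1$ internal edges, so by induction the coefficient of $\stfrak_{(n-m,1^m)}$ in $\mathbf{X}_{T\odot e}$ is $(-1)^m\binom{\#I(T)-1}{m}$. \emph{Dot-contraction:} $\mathbf{X}_{(T\odot e)\setminus\ell_e}=\stfrak_{(1)}\cdot\mathbf{X}_{T/e}$, and multiplying by $\stfrak_{(1)}$ just adjoins a part $1$, so the coefficient of $\stfrak_{(n-m,1^m)}$ here equals the coefficient of $\stfrak_{(n-m,1^{m-1})}$ in $\mathbf{X}_{T/e}$; since $T/e$ is a tree on $n-1$ vertices with $\#I(T)-1$ internal edges and $(n-m,1^{m-1})=\bigl((n-1)-(m-1),1^{m-1}\bigr)$, induction gives this coefficient is $(-1)^{m-1}\binom{\#I(T)-1}{m-1}$. \emph{Deletion:} writing $\mathbf{X}_{T_1}\mathbf{X}_{T_2}=\sum_{\mu,\nu}a_\mu b_\nu\,\stfrak_{\mu\cup\nu}$, if $\mu\cup\nu=(n-m,1^m)$ with $n-m\ge 2$ then the large part $n-m$ lies in one of $\mu,\nu$ and the other equals a single column $(1^{|T_i|})$; but the coefficient of $\stfrak_{(1^N)}$ in the CSF of any tree on $N\ge 2$ vertices vanishes — it is the $m=N$ instance of the formula being proved, and a tree on $N\ge2$ vertices has fewer than $N$ internal edges, so $\binom{\#I}{N}=0$. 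Hence the deletion term contributes $0$ to every hook coefficient (the remaining case $\lambda=(1^n)$ is handled the same way). Combining with Proposition~\ref{prop:dnc-formula} and Pascal's rule,
\[
c_{(n-m,1^m)}=0-(-1)^{m-1}\binom{\#I(T)-1}{m-1}+(-1)^m\binom{\#I(T)-1}{m}=(-1)^m\binom{\#I(T)}{m},
\]
with the convention $\binom{\#I(T)-1}{-1}=0$ covering $m=0$; taking $m=1$ gives $|c_{(n-1,1)}|=\#I(T)$.

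The one genuinely delicate point — the main obstacle — is the vanishing of the deletion term. It rests on $\stfrak_\mu\stfrak_\nu=\stfrak_{\mu\cup\nu}$, on the fact that a hook partition is a union of two nonempty partitions only when one of them is a single column, and crucially on the sub-statement that a tree's CSF carries no $\stfrak_{(1^N)}$ term; this last point is exactly why the induction must be organized so that the full proposition (including its $m=N$ case) is available on all strictly smaller inputs. Everything else is routine: the $\#I$ bookkeeping under contraction, and the index shift caused by multiplying by $\stfrak_{(1)}$. Alternatively one can argue from Remark~\ref{remark:paths-coefficients}: any root-to-leaf path in a DNC tree ending in $St_{(n-m,1^m)}$ performs exactly $m$ dot-contractions (each dot-contraction creates exactly one permanent isolated vertex and no other operation does), and counting such paths again yields $\binom{\#I(T)}{m}$; but the recursion above makes the count most transparent.
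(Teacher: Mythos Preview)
Your proof is correct. The extended abstract does not include its own proof (it cites the full version \cite{GOT-CSF}), but the sentence preceding the proposition points to Remark~\ref{remark:paths-coefficients} and path-counting in a DNC tree; your induction via the DNC relation is precisely the recursive unwinding of that count (no-deletion paths have length $\#I(T)$, and choosing which $m$ steps are dot-contractions gives $\binom{\#I(T)}{m}$), and your final paragraph makes this equivalence explicit. One tiny slip worth fixing: the partition $(1^N)$ is the case $m=N-1$, not $m=N$, of $(N-m,1^m)$; the conclusion is unchanged since a tree on $N\ge 2$ vertices has at least one leaf edge, so $\#I<N-1$ and the binomial coefficient still vanishes.
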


\section{The Leading Partition}
Let $F$ be a forest with $n$ vertices with internal edges $I(F)$. We call the connected components of $F \setminus I(F)$
the \defn{leaf components} of $F$, and we denote by $\lc(F)$ the partition of $n$ whose parts are the orders of these components, that is, 
\[
\lc(F)\coloneqq\lambda(F\setminus I(F)).
\]
We call $\lc(F)$ the \defn{leaf component partition} of $F$.
Notice that $F \setminus I(F)$ is a spanning subgraph, i.e., every vertex in $F$ is also a vertex in $F \setminus I(F)$. Furthermore, $F\setminus I(F)$ is a forest whose connected components are all stars. Thus, a leaf component is always a star tree. 
\begin{example}\label{lc}
   The tree $T$ from Example \ref{exa:DNC-tree} has leaf components $St_4$, $St_2$, and $St_1$; hence $\lc(T) = (4,2,1)$.
   
 \centerline{       \begin{tikzpicture}[auto=center,every node/.style={circle, fill=black, scale=0.4}, scale=0.75]
            \node (a) at (0.25, 0) {};
            \node (b) at (1, 0) {};
            \node (c) at (1, .75) {};
            \node (d) at (1, -.75) {};
            \node (e) at (2, 0) {};
            \node (f) at (3, 0) {};
            \node (g) at (4, 0) {};
                
            \draw[thick] (a) -- (b);
            \draw[thick] (b) -- (c);
            \draw[thick] (b) -- (d);
            \draw[thick][color=red, thick] (b) -- (e);
            \draw[thick][color=red, thick] (e) -- (f);
            \draw[thick] (f) -- (g);
                
            \node (h) at (9, 0) {};
            \node (i) at (10, 0) {};
            \node (j) at (9.3, .5) {};
            \node (k) at (9.3, -.5) {};
            \node (l) at (11, 0) {};
            \node (m) at (12, 0) {};
            \node (n) at (13, 0) {};
                
            \draw[thick] (h) -- (i);
            \draw[thick] (i) -- (j);
            \draw[thick] (i) -- (k);
            \draw[thick] (m) -- (n);
                
            \draw[thick, ->] (5,0)--(6,0);
                
            \node[fill=none, scale=2] (p) at (-1,0) {$T = $};
            \node[fill=none, scale=2] (q) at (7.5,0) {$T\setminus I(T) =$};
            \node[fill=none, scale=2] at (1.5, 0.2) {$e_1$};
            \node[fill=none, scale=2] at (2.5, 0.2) {$e_2$};
        \end{tikzpicture}}
\end{example}
For $\lambda, \mu \vdash n$, we say that $\lambda \leq \mu$ if $\lambda = \mu$ or if $\lambda_i = \mu_i$ for $1\leq i<j$ and $\lambda_j<\mu_j$ for some $1\leq j \leq \ell(\lambda)$. This is called the \defn{lexicographic order} on the set of partitions, and it is a total order. 
In what follows, we assume that $\{\stfrak_\lambda : \lambda \vdash n\}$ is an ordered basis with respect to the lexicographic order. For any tree $T$ of order $n$, we write \[\mathbf{X}_T = \sum_{\lambda\vdash n} c_\lambda \mathfrak{st}_\lambda, \]
where the summands are listed in increasing lexicographic order.

    \begin{definition}
        Let $F$ be a forest of order $n$ with $\mathbf{X}_F=\sum_{\lambda \vdash n}c_\lambda \mathfrak{st}_\lambda$. The \defn{leading partition} of $\mathbf{X}_F$ is the smallest partition $\lambda \vdash n$, in lexicographic order, such that $c_{\lambda} \neq 0$. We then say that $c_{\lambda}$ is the \defn{leading coefficient}. We denote the leading partition of $F$ by \defn{$\lead(\csff)$}.
    \end{definition}

\begin{example}\label{lead}
    For the tree $T$ from Example \ref{exa:DNC-tree}, the leading partition is $\lead(\csft)=(4,2,1)$, the leading coefficient is $c_{(4,2,1)} = -1$. Observe that $\lc(T) = \lead(\csft)$.
\end{example}

In Examples \ref{lc} and \ref{lead}, we see that $\lead = \lc$. In Section 4 of \cite{GOT-CSF}, we prove that this is always the case. We also give an elegant combinatorial formula for the leading coefficient. Recall that a deep vertex is an internal vertex that is not an endpoint of a leaf edge. The number of deep vertices is equal to the number of 1s in $\lc$.

\begin{theorem}\cite[Theorem 4.16 and Theorem 4.29]{GOT-CSF}\label{Thm:lead}
    Let $F$ be a forest with $n$ vertices. Then $ \lead(\csff) = \lc(F)$.
    If in addition, $F$ has deep vertices $u_1, \ldots, u_m$, then:
    \[
    c_{\lead} = (-1)^m \prod_{i=1}^m (\deg(u_i)-1)
    \]
\end{theorem}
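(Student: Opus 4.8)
The plan is to prove both assertions simultaneously by induction on the number $\#I(F)$ of internal edges, the engine being the DNC relation (Proposition~\ref{prop:dnc-formula}), with the path interpretation of Remark~\ref{remark:paths-coefficients} supplying the intuition. The crucial preliminary is a \emph{monotonicity lemma}: for any forest $F'$ and any internal edge $e=uv$ of $F'$,
\[
\lc(F'\setminus e)\ \geq\ \lc(F'),\qquad \lc\big((F'\odot e)\setminus\ell_e\big)\ \geq\ \lc(F'),\qquad \lc(F'\odot e)\ >\ \lc(F')
\]
in lexicographic order. To prove this one first checks that the internal edges of $F'\odot e$ and of $(F'\odot e)\setminus\ell_e$ are in a natural bijection with $I(F')\setminus\{e\}$; hence deleting all of them from $F'\odot e$ leaves the leaf components of $u$ and $v$ merged into a single star (together with the new leaf $\ell_e$), every other leaf component unchanged, while for the dot-contraction the single vertex $\ell_e$ is instead split off. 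So $\lc(F'\odot e)$ arises from $\lc(F')$ by replacing two parts $a+1,b+1$ (the orders of the leaf components of $u,v$, where $a,b$ count their leaf neighbours) by the single part $a+b+2$, and $\lc((F'\odot e)\setminus\ell_e)$ by replacing them by $a+b+1,1$; both are dominance-increasing, hence lex-increasing, the first strictly. For deletion, $(F'\setminus e)\setminus I(F'\setminus e)$ is a spanning subforest of $F'$ containing $F'\setminus I(F')$, and adjoining edges to a spanning forest of stars only merges components, which is again dominance- (hence lex-) increasing. One also records the equality cases: $\lc((F'\odot e)\setminus\ell_e)=\lc(F')$ iff $u$ or $v$ is deep, and $\lc(F'\setminus e)=\lc(F')$ iff no internal edge of $F'$ other than $e$ becomes a leaf edge in $F'\setminus e$.

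For the induction, the base case $\#I(F)=0$ is immediate: $F$ is a forest of stars, $\mathbf{X}_F=\mathfrak{st}_{\lambda(F)}$ and $\lc(F)=\lambda(F)$, the deep vertices are exactly the isolated-vertex components, and each contributes a factor $\deg-1=-1$ to both $(-1)^m$ and the product, so the formula reads $1=1$. For the inductive step, apply Proposition~\ref{prop:dnc-formula} to an internal edge $e=uv$ chosen so that $v$ is deep whenever $F$ has a deep vertex of positive degree (and an arbitrary internal edge otherwise). By the monotonicity lemma and the inductive hypothesis applied to the three forests on the right of the DNC relation, each of which has strictly fewer internal edges, every $\mathfrak{st}_\mu$ occurring in $\mathbf{X}_F$ has $\mu\geq\lc(F)$, so $\lead(\mathbf{X}_F)\geq\lc(F)$. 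To pin down the coefficient of $\mathfrak{st}_{\lc(F)}$, observe that $\mathbf{X}_{F\odot e}$ contributes $0$ (as $\lc(F\odot e)>\lc(F)$), that $\mathbf{X}_{(F\odot e)\setminus\ell_e}$ contributes $c_{\lead}\big((F\odot e)\setminus\ell_e\big)$ (as $\lc((F\odot e)\setminus\ell_e)=\lc(F)$ since $v$ is deep), and that $\mathbf{X}_{F\setminus e}$ contributes $c_{\lead}(F\setminus e)$ or $0$ according as $\lc(F\setminus e)$ equals $\lc(F)$ or not. Substituting the inductive formula for these smaller forests and tracking how the multiset of deep vertices and their degrees changes --- the fresh isolated vertex $\ell_e$ enters with degree $0$, contributing a factor $-1$, and the merged vertex $w$ is deep exactly when $u$ was --- the signs combine and, in the generic case $\deg(v)\ge 3$, the arithmetic reduces to $(\deg(v)-2)+1=\deg(v)-1$, giving
\[
c_{\lc(F)}\ =\ c_{\lead}(F\setminus e)-c_{\lead}\big((F\odot e)\setminus\ell_e\big)\ =\ (-1)^m\prod_{i=1}^m(\deg(u_i)-1)\ \neq\ 0,
\]
whence $\lead(\mathbf{X}_F)=\lc(F)$.

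The main obstacle is the bookkeeping in this last step: one must split into cases according to whether deleting $e$ demotes another internal edge (which happens precisely when $\deg(v)=2$, or when $u$ is deep of degree $2$) and whether $u$ itself is deep, and verify in each degenerate case that the signs and the count of deep vertices still reproduce the product --- the $-1$ from $\deg(\ell_e)-1$ cancelling against a change in the parity $(-1)^m$, and so forth. It is worth remarking that there is no hidden cancellation to fear: by Remark~\ref{remark:paths-coefficients} each coefficient in a DNC tree is $\pm$ a cardinality. In fact the most transparent route to the formula is the resulting path-counting reformulation: the root-to-leaf paths of a DNC tree ending at a star forest with partition $\lc(F)$ are exactly those using no leaf-contractions, only non-demoting deletions, and dot-contractions at edges having a deep endpoint; such a path performs exactly $m$ dot-contractions, and the number of them is $\prod_i(\deg(u_i)-1)$, where for each deep vertex $u_i$ the factor $\deg(u_i)-1$ records the position, among $u_i$'s incident internal edges in the order the algorithm processes them, at which the unique dot-contraction consuming $u_i$ occurs.
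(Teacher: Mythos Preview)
The present paper is an extended abstract and does not contain a proof of this theorem; it defers to \cite[Theorems~4.16 and~4.29]{GOT-CSF} and only remarks that the result ``is far from trivial.'' So there is no in-paper proof to compare against.

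That said, your argument is correct and is almost certainly in the spirit of the full proof, given the paper's emphasis on the DNC relation and Remark~\ref{remark:paths-coefficients}. Your monotonicity lemma is the right engine: the bijection $I(F'\odot e)\leftrightarrow I(F')\setminus\{e\}$ and the containment $I(F'\setminus e)\subseteq I(F')\setminus\{e\}$ hold for the reasons you give, and the resulting dominance (hence lex) comparisons, together with your identification of the equality cases, are exactly what the induction needs. The case analysis you flag as ``the main obstacle'' does close: when $v$ is deep and $u$ is not, the arithmetic is your $(\deg v-2)+1=\deg v-1$; when both are deep with $\deg u,\deg v\ge 3$, one gets $(\deg u-2)(\deg v-2)+(\deg u+\deg v-3)=(\deg u-1)(\deg v-1)$; and in the degenerate cases where one of them has degree~$2$ the deletion term drops out and the single dot-contraction term already gives the product, since the corresponding factor $\deg(\cdot)-1$ equals~$1$. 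The nonvanishing of $c_{\lc(F)}$ follows because a deep vertex never has degree~$1$. Your closing path-counting reformulation is the combinatorial shadow of this recursion, though as stated it is more heuristic than the induction itself.
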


This result is far from trivial. In particular, it is not even clear that there exists a path in a DNC tree $\mathcal{T}(F)$ from the root to a star forest indexed by $\lc(F)$ since the path obtained by applying $\#I(F)$ deletions is not always possible. For instance, in Example \ref{exa:DNC-tree}, there is no path obtained by performing two deletions.

\begin{example} Let $T$ be the tree below. Notice that $\lc(T)=(2,2,2,1)$

\centerline{
        \begin{tikzpicture}[auto=center,every node/.style={circle, fill=black, scale=0.4}, scale=0.8]
            \node (1a) at (0,0) {};
            \node (1b) at (1,0) {};
            \node (1c) at (2,0) {};
            \node (1d) at (3, .5) {};
            \node (1e) at (3,-.5) {};
            \node (1f) at (4, .5) {};
            \node (1g) at (4, -.5){};
            \node[scale=2.5, fill=none] at (1.9, .3) {$u$};
            \draw[thick] (1a) -- (1b) -- (1c);
            \draw[thick] (1c) -- (1d) -- (1f);
            \draw[thick] (1c) -- (1e) -- (1g);            
        \end{tikzpicture}}
   
\noindent We have $\csft = -2\mathfrak{st}_{(2^3,1)} +3\mathfrak{st}_{(3,2,1^2)}   +3\mathfrak{st}_{(3,2^2)}
    -\mathfrak{st}_{(4,1^3)}
    -6\mathfrak{st}_{(4,2,1)}+ 3\mathfrak{st}_{(5,1,1)} + 3\mathfrak{st}_{(5,2)} - 3\mathfrak{st}_{(6,1)} +
    \mathfrak{st}_{(7)}$.  We can see that $\lead(\csft)=(2,2,2,1) = \lc(T)$ and there is only one deep vertex $u$ of degree 3, hence the coefficient is $c_{\lead}=(-1)^1(3-1) = -2$ 
\end{example}

\subsection{Tree reconstructions using $\lead$}

Given any tree $T$, we can now determine the leading partition of $\csft$ based on the properties of the tree $T$ itself. In this subsection, we show that $\lead(\csft)$ allows us to make some progress towards a positive answer to the tree isomorphism conjecture. In particular, we immediately obtain the following corollary:

\begin{corollary} \label{cor:different-leading}
    If $T_1$ and $T_2$ are trees whose leaf components have different orders, then $\mathbf{X}_{T_1}\neq \mathbf{X}_{T_2}$, that is, if $\lc(T_1)\neq \lc(T_2)$, then $\mathbf{X}_{T_1}\neq \mathbf{X}_{T_2}$. \qed
\end{corollary}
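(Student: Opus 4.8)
The plan is to derive Corollary \ref{cor:different-leading} directly from Theorem \ref{Thm:lead} as an essentially immediate consequence, so the proof proposal is really just a matter of assembling the right logical chain. First I would recall that Theorem \ref{Thm:lead} tells us $\lead(\csft) = \lc(T)$ for any tree $T$; that is, the smallest partition (in lexicographic order) indexing a nonzero coefficient in the star-basis expansion of $\csft$ is precisely the leaf component partition $\lc(T)$. The key observation is that $\lead(\csft)$ is an invariant of the symmetric function $\csft$ itself: since $\{\stfrak_\lambda : \lambda \vdash n\}$ is a basis of $\mathsf{Sym}_n$, the coefficients $c_\lambda$ in $\csft = \sum_\lambda c_\lambda \stfrak_\lambda$ are uniquely determined by $\csft$, and hence so is the minimal $\lambda$ with $c_\lambda \neq 0$.

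The argument then proceeds by contraposition. Suppose $T_1$ and $T_2$ are trees with $\lc(T_1) \neq \lc(T_2)$. If they have different orders, then $\csft[T_1]$ and $\csft[T_2]$ are homogeneous symmetric functions of different degrees and are trivially unequal; so assume both have $n$ vertices. By Theorem \ref{Thm:lead}, $\lead(\mathbf{X}_{T_1}) = \lc(T_1) \neq \lc(T_2) = \lead(\mathbf{X}_{T_2})$. If we had $\mathbf{X}_{T_1} = \mathbf{X}_{T_2}$, then since the leading partition is a function of the symmetric function alone (by uniqueness of star-basis coordinates), we would get $\lead(\mathbf{X}_{T_1}) = \lead(\mathbf{X}_{T_2})$, a contradiction. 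Therefore $\mathbf{X}_{T_1} \neq \mathbf{X}_{T_2}$.

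There is essentially no obstacle here: all the substantive work has already been done in Theorem \ref{Thm:lead}, whose proof (carried out in \cite{GOT-CSF}) establishes that the leaf component partition can be recovered from the CSF. The only thing to be careful about is the degree-mismatch edge case, which should be mentioned for completeness but is immediate since the CSF of an order-$n$ graph is homogeneous of degree $n$. If one wanted a self-contained remark, one could also note explicitly that this recovers, as a special instance, the well-known fact that the CSF determines the number of vertices and the number of leaves of a tree, but that is a side comment rather than part of the proof. I would keep the write-up to two or three sentences, ending with \qed as the statement already does.
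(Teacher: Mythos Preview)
Your proposal is correct and matches the paper's own treatment: the corollary is stated with a \qed\ and introduced as something ``we immediately obtain'' from Theorem~\ref{Thm:lead}, i.e., the paper gives no proof beyond the implicit observation that $\lead(\csft)=\lc(T)$ is determined by $\csft$. Your write-up simply spells out that immediate deduction (plus the harmless degree-mismatch edge case), so there is nothing to add.
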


The leading partition also allows us to positively answer the  conjecture for another infinite family of trees.
\begin{definition}
    A \defn{bi-star} is a tree consisting of two star graphs whose centers are joined by an internal edge. An \defn{extended bi-star} is a tree consisting of two star graphs whose centers are connected by a path of one or more deep vertices of degree 2.
\end{definition}

\begin{example}
    The extended bi-star shown below has leading partition $(6,4,1^4)$
    
\centerline{ 
\begin{tikzpicture}[every node/.style={circle, fill=black, scale=0.4},scale=0.8]
            \node (a) at (0,0) {};
            \node (b) at (1,0) {};
            \node (c) at (2,0) {}; 
            \node (d) at (3,0) {};
            \node (e) at (4,0) {};
            \node (f) at (5,0) {};

            \draw[thick] (a)--(b);
            \draw[thick] (b)--(c);
            \draw[thick] (c)--(d);
            \draw[thick] (d)--(e);
            \draw[thick] (e)--(f);

            \node (a1) at (-0.6,.7) {};
            \node (a2) at (-.5,-.7){};
            \node (a3) at (-.7, 0) {};
            \node (a4) at (-.7, .4) {};
            \node (a5) at (-.6, -.4) {};

            \draw[thick] (a)--(a1);
            \draw[thick] (a)--(a2);
            \draw[thick] (a)--(a3);
            \draw[thick] (a)--(a4);
            \draw[thick] (a)--(a5);

            \node (f1) at (5.7,0) {};
            \node (f2) at (5.7, .4) {};
            \node (f3) at (5.7, -.4){};

            \draw[thick] (f)--(f1);
            \draw[thick] (f)--(f2);
            \draw[thick] (f)--(f3);           
        \end{tikzpicture}}
\end{example}

\begin{corollary}\cite[Corollary 4.23 and Corollary 4.25]{GOT-CSF} \label{cor:bi-stars-leading}
    Let $T$ be a tree of order $n$. Then, $\lead(\csft)=(i,j,1^{n-i-j})$ for some $i,j>1$ if and only if $T$ is a bi-star or extended bi-star with leaf-stars $St_{i}$ and $St_{j}$ separated by $n-i-j$ deep vertices of degree 2. Therefore, bi-stars and extended bi-stars are distinguished by their CSF. 
\end{corollary}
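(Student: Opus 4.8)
The plan is to reduce the statement to Theorem~\ref{Thm:lead}, which identifies $\lead(\csft)$ with the leaf component partition $\lc(T)$; the corollary then becomes the purely structural claim that $\lc(T)=(i,j,1^{n-i-j})$ with $i,j>1$ if and only if $T$ is a bi-star or extended bi-star with leaf-stars $St_i$ and $St_j$. The bridge between the two descriptions is the dictionary, already observed in the excerpt, between $\lc(T)$ and the internal structure of $T$: the parts of $\lc(T)$ equal to $1$ are in bijection with the deep vertices of $T$, while the parts that are $\geq 2$ are in bijection with the \emph{non-deep} internal vertices, each such vertex $v$ being the center of the leaf component consisting of $v$ together with its leaf-neighbors.

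For the "only if" direction, suppose $T$ is a bi-star or extended bi-star with leaf-stars $St_i,St_j$ and $m=n-i-j$ separating deep vertices of degree $2$. Then $I(T)$ is exactly the path of $m+1$ edges joining the two star centers, and deleting these edges leaves the two stars $St_i$ and $St_j$ together with $m$ isolated vertices; hence $\lc(T)=(i,j,1^{m})$, and Theorem~\ref{Thm:lead} gives $\lead(\csft)=(i,j,1^{n-i-j})$.

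For the "if" direction, assume $\lead(\csft)=(i,j,1^{n-i-j})$ with $i,j>1$, so $\lc(T)=(i,j,1^{n-i-j})$ by Theorem~\ref{Thm:lead}. By the dictionary above, $T$ has exactly two non-deep internal vertices $a$ and $b$, with $i-1$ and $j-1$ leaf-neighbors respectively, and exactly $n-i-j$ deep vertices; in particular every leaf of $T$ is adjacent to $a$ or to $b$. Consider the subgraph $T^{\circ}$ of $T$ induced by its internal vertices; its edge set is precisely $I(T)$, and $T^{\circ}$ is a tree, being connected (the path in $T$ between two internal vertices passes only through internal vertices) and acyclic. A deep vertex has no leaf-neighbor, so its degree in $T^{\circ}$ equals its degree in $T$, which is $\geq 2$; hence every leaf of $T^{\circ}$ lies in $\{a,b\}$. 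Since a tree on at least two vertices has at least two leaves, $T^{\circ}$ has exactly the two leaves $a$ and $b$, and a tree with exactly two leaves is a path; so $T^{\circ}$ is a path from $a$ to $b$ whose interior vertices are deep vertices and therefore have degree $2$ in $T$. Reattaching the $i-1$ leaves at $a$ and the $j-1$ leaves at $b$ exhibits $T$ as a bi-star (when $n=i+j$) or an extended bi-star (when $n>i+j$) with leaf-stars $St_i$ and $St_j$ separated by $n-i-j$ deep degree-$2$ vertices.

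Finally, an extended bi-star is determined up to isomorphism by the multiset $\{i,j\}$ of its leaf-star orders together with the number $n-i-j$ of separating vertices, and these are all read off from $\lc(T)=\lead(\csft)$; combined with the characterization just proved, any tree with the same CSF as a bi-star or extended bi-star is itself one and hence isomorphic to it. The main obstacle I anticipate is the "if" direction — specifically, establishing cleanly that $T^{\circ}$ is a tree and that its only possible leaves are $a$ and $b$, so that it must be a path, while correctly handling degenerate cases (such as a leaf-star of order $2$ whose center has degree $2$); everything else is bookkeeping once Theorem~\ref{Thm:lead} is available.
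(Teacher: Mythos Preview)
Your proposal is correct and matches the paper's intended approach: the paper does not include a proof of this corollary (it is cited from \cite{GOT-CSF}), but it explicitly frames the result as a consequence of Theorem~\ref{Thm:lead}, which is precisely the reduction you carry out. Your structural analysis of $T^{\circ}$ is clean and handles all cases, including the degenerate ones you flagged; in particular, the observation that deep vertices retain degree $\geq 2$ in $T^{\circ}$, forcing $a$ and $b$ to be its only leaves and hence $T^{\circ}$ to be a path, is exactly the right mechanism.
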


Note that bi-stars and extended bi-stars are particular cases of caterpillars, which are already known to be distinguished by their CSF \cite{loebl2018isomorphism,martin2008distinguishing}. We included Corollary \ref{cor:bi-stars-leading} here to illustrate that Theorem \ref{Thm:lead} has strong consequences and also the proofs in \cite{loebl2018isomorphism, martin2008distinguishing} are not constructive and do not use $\mathbf{X}_T$ directly. 

In what follows, we show that edge adjacencies can also be recovered from $\csft$, which, together with $\lead$ will allow us to reconstruct other families of trees.  If $T$ is a tree and $\mathcal{L}_1$ and $\mathcal{L}_2$ are two leaf components with central vertices $v_1$ and $v_2$, respectively, we say that $\mathcal{L}_1$ and $\mathcal{L}_2$ are \defn{adjacent} if $v_1v_2 \in E(T)$. In addition, we will refer to $\mathcal{L}_1$ and $\mathcal{L}_2$ as the \defn{leaf component endpoints} for the internal edge $e=v_1v_2$.

\begin{example}
    In the tree below, the leaf components $\mathcal{L}_1$ (with center $v_1$) and  $\mathcal{L}_2$ (with center $v_2$), are adjacent and they are the leaf component endpoints of the edge $v_1v_2$.
    
\centerline{   
        \begin{tikzpicture}[every node/.style={circle, fill=black, scale=0.4}, thick]
            \node[fill=red] (a) at (0,0) {};
            \node[fill=red] (a1) at (-.53, -.53) {};
            \node[fill=red] (a2) at (-.75, 0) {};
            \node[fill=red] (a3) at (-.53, .53) {};

            \node[scale=1.5, fill=none] at (0.05, -.3) {$v_1$};
            
            \node[fill=blue] (b) at (1,0) {};
            \node[fill=blue] (b1) at (0.63, 0.53) {};
            \node[fill=blue] (b2) at (1.37, 0.53) {};

            \node[scale=1.5, fill=none] at (1.05, -.3) {$v_2$};

            \node (c) at (2,0) {};
            
            \node (d) at (3,0) {};
            \node (d1) at (3.75, 0) {};

            \draw[red] (a) -- (a1);
            \draw[red] (a) -- (a2);
            \draw[red] (a) -- (a3);

            \draw[blue] (b) -- (b1);
            \draw[blue] (b) -- (b2);

            \draw(d)--(d1);

            \draw (a) -- (b);
            \draw(b) -- (c);
            \draw(c)--(d);
        \end{tikzpicture}}
\end{example}
For two partitions $\lambda$ and $\mu$, we use \defn{$\lambda-\mu$} to denote the \defn{multiset difference} between $\lambda$ and $\mu$, that is, for each $i \in \{1,\ldots,n\}$, the multiplicity of $i$ in $\lambda-\mu$ is $\max\{m_i(\lambda)-m_i(\mu), 0\}$, where $m_i(\lambda)$ is the multiplicity of $i$ in $\lambda$.
\begin{definition}
    Let $\csft=\sum_{\lambda \vdash n}c_\lambda \mathfrak{st}_\lambda$ be the CSF of a tree of order $n$ with leading partition $\lead$. For any partition $\mu \vdash n$ without 1s,  $\ell(\mu) = \ell(\lead(\csft)) - 1$, and $c_\mu \neq 0$, we define the \defn{adjacency multiset} by  $E_\mu = \lead - \mu$.
\end{definition}

\begin{example}\label{ex:adjacencies-improper}
    Consider the following CSF:
    \[
    \csft = \textcolor{blue}{-\stfrak_{(4,2,1)}} + \textcolor{red}{\stfrak_{(4,3)}} + \stfrak_{(5,1,1)} + \textcolor{red}{\stfrak_{(5,2)}} - 2\stfrak_{(6,1)} + \stfrak_{(7)}
    \]
    The leading term has been colored blue, and the terms indexed by partitions of length $\ell(\lead(\csft)) - 1$ without 1s have been colored red. For $\mu=(4,3)$, we obtain $E_\mu = \lmulti 2,1 \rmulti$ and for $\nu = (5,2)$, $E_{\nu} = \lmulti 4,1 \rmulti$. From Example \ref{lc}, we see that $E_\mu$ and $E_{\nu}$ contain the orders of leaf components that are adjacent in $T$, allowing us to find the internal edges $e_1$ and $e_2$ in that example from $\mathbf{X}_T$.
\end{example}

    The following proposition makes precise the relation between the $E_\mu$ and the edges with endpoints the centers of leaf components of a tree. For a part $i$ of a partition $\lambda$, we let $m_i$ denote its multiplicity.
\begin{proposition}\cite[Proposition 5.14]{GOT-CSF} \label{prop:k-edge-adjacencies}
    Assume that $\csft$ is the CSF of a tree of order $n$ with leading partition $\lead(\csft)=(n^{m_n},\ldots, 1^{m_1})$. Let $\mu \vdash n$ such that $c_\mu \neq 0$ in $\csft$, $\ell(\mu) = \ell(\lead(\csft))-1$, and $\mu$ contains no 1s. Then,
    \begin{enumerate}
        \item[(a)] If $m_1=0$, then $E_\mu = \lmulti p,q \rmulti$, where $p$ and $q$ are orders of two adjacent leaf components in $T$. And $c_\mu$ is the number of internal edges with leaf component endpoints of orders $p$ and $q$.
        \item[(b)] If $m_1=1$, then $E_\mu = \lmulti 1, q \rmulti$, where $q$ is the order of a leaf component adjacent to the deep vertex. Then, $c_\mu$ is the number of leaf components of order $q$ adjacent to the deep vertex.
    \end{enumerate}
\end{proposition}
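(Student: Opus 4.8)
\noindent\emph{Approach.} The plan is to read $c_\mu$ off a DNC tree $\mathcal{T}(T)$ via Remark~\ref{remark:paths-coefficients}, using two bookkeeping facts valid for any root-to-leaf path $P$: if $P$ uses $d$ deletions and $c$ dot-contractions (and any number of leaf-contractions) and terminates at a star forest of type $\lambda$, then $\ell(\lambda)=1+d+c$, since a deletion and a dot-contraction each split off exactly one new connected component while a leaf-contraction does not; and the number of parts of $\lambda$ equal to $1$ equals $c$, since an isolated vertex can only be produced by a dot-contraction and, once produced, is never touched by a later operation. Together these give $c_\mu=(-1)^{m_1(\mu)}|\mathcal{S}_\mu|$; since $\mu$ has no $1$s, every path ending at $\mu$ makes no dot-contraction and exactly $\ell(\mu)-1=\ell(\lead)-2$ deletions, so $c_\mu=|\mathcal{S}_\mu|$ and it remains to count $\mathcal{S}_\mu$.

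\noindent Next I would pass to the leaf-component skeleton of $T$. By Theorem~\ref{Thm:lead}, $\lead=\lc(T)$, so $\ell(\lead)$ is the number of leaf components and $m_1$ is the number of deep vertices; in particular the hypotheses restrict us to $m_1\le 1$ (if $m_1\ge 2$ one checks $c_\mu=0$, as resolving each deep vertex without a dot-contraction costs one extra drop in $\ell$). I claim that along a path counted by $|\mathcal{S}_\mu|$ the current forest always has the property that every internal vertex is adjacent to a leaf, with the sole possible exception of the deep vertex $w_0$ of $T$ while it is still present: deletions and leaf-contractions preserve leaf-anchors, and no dot-contraction is used. Hence a deletion reduces the number of internal edges by exactly $1$, \emph{except} a deletion that drops $w_0$ from degree $2$ to degree $1$, which resolves two internal edges at once. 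Comparing the total number of internal edges $\#I(T)$ with $d$, $c=0$, and the number of leaf-contractions then pins down the path: in case (a) it deletes every internal edge but one and leaf-contracts the remaining edge $e^\ast$; in case (b) it deletes all internal edges not incident to $w_0$, and among the $\deg(w_0)$ edges at $w_0$ it either leaf-contracts exactly one of them and deletes the rest, or deletes all but the last one (which then becomes a leaf edge and auto-resolves). In every case the net effect on the leaf-component decomposition of $T$ is to fuse exactly one adjacent pair of leaf components into a single star.

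\noindent In case (a), $e^\ast=v_1v_2$ joins the centres of leaf components $\mathcal{L}_1,\mathcal{L}_2$ of orders $p,q$; leaf-contracting $e^\ast$ and deleting all other internal edges merges them into $St_{p+q}$ and leaves every other leaf component as its own star, so $\mu$ is $\lc(T)$ with the parts $p,q$ replaced by $p+q$. Hence $E_\mu=\lc(T)-\mu=\lmulti p,q \rmulti$, the unordered pair $\{p,q\}$ is recovered from $\mu$, and — since each internal edge with leaf-component endpoints of orders $p,q$ determines exactly one such path and distinct such edges give distinct paths to the same $\mu$ — $c_\mu$ is the number of such internal edges. In case (b), fusing $w_0$ into a neighbouring leaf component $\mathcal{L}_u$ of order $q$ — by leaf-contracting $w_0u$ and deleting the other edges at $w_0$, or by deleting every edge at $w_0$ except $w_0u$ (which then auto-resolves) — produces $St_{q+1}$ and eliminates the $St_1$ that $w_0$ contributed to $\lead$, so $\mu$ is $\lc(T)$ with the parts $1,q$ replaced by $q+1$, giving $E_\mu=\lmulti 1,q \rmulti$. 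One then checks, using that $|\mathcal{S}_\mu|$ is independent of the DNC tree chosen, that exactly one path realizes each neighbour $u$ of $w_0$; neighbours whose leaf components have order $q$ contribute to this same $\mu$, so $c_\mu$ is the number of leaf components of order $q$ adjacent to $w_0$.

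\noindent The crux — and the step I expect to be hardest to make fully rigorous — is the structural claim in the second paragraph: that spending exactly $\ell(\lead)-2$ deletions and no dot-contraction forces the forest to fuse a single adjacent pair of leaf components, together with (in case (b)) the verification that the interaction between the ``last deletion at $w_0$'' phenomenon and the order in which the algorithm processes edges yields exactly one path per neighbour of $w_0$, rather than one per (neighbour, mechanism) pair. The remaining ingredients are the component/isolated-vertex bookkeeping of the first paragraph and Theorem~\ref{Thm:lead}.
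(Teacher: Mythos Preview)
The paper does not prove this proposition; it is quoted from \cite{GOT-CSF} and only illustrated by examples here, so there is no in-paper argument to compare against. That said, your DNC-path-counting approach is sound and is exactly the kind of argument the DNC framework is designed for. The two bookkeeping identities you open with are correct: starting from a connected tree, each deletion and each dot-contraction adds one component while a leaf-contraction does not, giving $\ell(\lambda)=1+d+c$; and since an internal edge has both endpoints of degree $\ge 2$, neither a deletion nor a leaf-contraction can create an isolated vertex, while each dot-contraction creates exactly one that is never touched again, so $m_1(\lambda)=c$. From these, $c_\mu=|\mathcal S_\mu|$ when $\mu$ has no $1$s, and your reduction to counting paths with $d=\ell(\lead)-2$ deletions and no dot-contractions is valid. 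The structural claim in case (a) --- that along such a path every internal vertex keeps a leaf neighbour, so each deletion (and each leaf-contraction) removes exactly one internal edge --- is correct and forces the path to have length $\#I(T)=\ell(\lead)-1$, hence exactly one leaf-contraction; the map ``path $\mapsto$ leaf-contracted edge'' is then injective (two paths first differ at a step where one leaf-contracts and the other deletes the same edge), and a count shows it is a bijection with $I(T)$.

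For case (b) you correctly isolate the only delicate point. The clean way to close it is the one you hint at: exploit independence of $|\mathcal S_\mu|$ from the DNC tree by choosing the DNC tree that always processes an edge incident to the deep vertex $w_0$ first (in a fixed order $w_0u_1,\dots,w_0u_{d_0}$), and only afterwards the remaining edges. Along a path with $c=0$ and $d=\ell(\lead)-2$, either you leaf-contract $w_0u_j$ at step $j$ (for some $1\le j\le d_0-1$) and thereafter delete everything, or you delete at every one of steps $1,\dots,d_0-1$, which drops $\deg(w_0)$ to $1$ and auto-resolves $w_0u_{d_0}$. This gives exactly $d_0=\deg(w_0)$ paths, one fusing $w_0$ with each neighbour $u_j$, and the two ``mechanisms'' never coexist for the same neighbour in this tree. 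That yields the desired bijection and finishes (b). So the proposal is correct; the only missing ingredient is this explicit choice of DNC tree to pin down the one-path-per-neighbour count.
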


\begin{example}
    Example \ref{ex:adjacencies-improper} illustrates (b). For case (a), consider the following CSF:
    \begin{eqnarray*}
        \csft &=& \textcolor{blue}{\stfrak_{(3^2,2^2)}}-2\stfrak_{(4,3,2,1)}-\stfrak_{(5,2^2,1)}+\stfrak_{(5,3,1^2)}+\textcolor{red}{2\stfrak_{(5,3,2)}} + 2\stfrak_{(6,2,1^2)} + \textcolor{red}{\stfrak_{(6,2,2)}}\\
        &&-2\stfrak_{(6,3,1)}-\stfrak_{(7,1^3)}-4\stfrak_{(7,2,1)}+\stfrak_{(7,3)}+3\stfrak_{(8,1^2)}+2\stfrak_{(8,2)}-3\stfrak_{(9,1)}+\stfrak_{(10)}
    \end{eqnarray*}
    $\lead = (3^2,2^2)$ has length 4, the terms indexed by partitions of length 3, $\mu = (5,3,2)$ and $\nu = (6,2,2)$, are highlighted in red. Thus, $E_\mu = \lmulti 3,2\rmulti$ and $E_{\nu} = \lmulti 3,3\rmulti$. By Proposition \ref{prop:k-edge-adjacencies}, there are $c_{(5,3,2)} = 2$ internal edges in $T$ with leaf component endpoints of orders 3 and 2, and there is $c_{(6,2,2)}=1$ internal edge with leaf component endpoints of orders 3. 

\end{example}

\begin{corollary}\label{cor:proper-tree-distinct-parts}
    Let $\csft$ be the CSF of a tree $T$ such that $\lead=\lead(\csft) = (\lambda_1, \ldots, \lambda_k)$ contains no 1s and has all distinct parts. Then, $T$ can be reconstructed from $\csft$. In particular, $T$ can be reconstructed from $\lead$ and the adjacency multisets.
\end{corollary}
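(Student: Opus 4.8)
The plan is to read off from $\csft$ the orders of the leaf components of $T$ together with the list of pairs of leaf components that are adjacent, and then to observe that this data reconstructs $T$ precisely because $\lead$ has distinct parts and no $1$s. Throughout, write $\lead=\lead(\csft)=(\lambda_1>\cdots>\lambda_k)$ with $\lambda_k\geq 2$; if $k=1$ then $T=St_n$ and there is nothing to prove, so assume $k\geq 2$.

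\emph{Step 1: the leaf components and the contracted tree.} By Theorem~\ref{Thm:lead}, $\lc(T)=\lead$, so from $\csft$ we know the multiset of orders of the leaf components of $T$; since the $\lambda_i$ are distinct, the leaf components are in bijection with the parts, and I will write $\mathcal{L}_i$ for the one of order $\lambda_i$. As $\lead$ has no $1$s, $T$ has no deep vertices, so each $\mathcal{L}_i$ is a star $St_{\lambda_i}$ with a well-defined \emph{center} $v_i$: the unique internal vertex of $T$ it contains (when $\lambda_i\geq 3$ this is the unique vertex of degree $\geq 2$ inside $\mathcal{L}_i$; when $\lambda_i=2$, $\mathcal{L}_i$ is an edge and $v_i$ is its non-leaf endpoint, which exists because $k\geq 2$). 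Contracting each $\mathcal{L}_i$ to its center turns $T$ into a tree $T'$ on the labeled vertex set $\{v_1,\dots,v_k\}$ whose edges are exactly the internal edges of $T$ (contracting pairwise-disjoint connected subgraphs of a tree yields a tree). Conversely, $T$ is recovered from $T'$ and $\lead$ by replacing each $v_i$ with a copy of $St_{\lambda_i}$ and reattaching the internal edges at its center; this map is well defined up to isomorphism, the only ambiguity being which endpoint of an $St_2$ one calls its center, which gives isomorphic results. Hence reconstructing $T$ amounts to recovering the edge set of the labeled tree $T'$.

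\emph{Step 2: recovering the adjacencies.} For $1\leq i<j\leq k$ let $\mu_{ij}\vdash n$ be obtained from $\lead$ by deleting the parts $\lambda_i,\lambda_j$ and inserting the single part $\lambda_i+\lambda_j$; then $\mu_{ij}$ has no $1$s, $\ell(\mu_{ij})=k-1=\ell(\lead)-1$, and, because the $\lambda_m$ are distinct, $\lead-\mu_{ij}=\lmulti \lambda_i,\lambda_j\rmulti$. I claim $v_iv_j\in E(T)$ if and only if $c_{\mu_{ij}}\neq 0$. For the forward direction, suppose $v_iv_j$ is an internal edge of $T$. In a DNC tree $\mathcal{T}(T)$, follow the path that applies a leaf-contraction to $v_iv_j$ and then a deletion to each remaining internal edge, taken in an order proceeding from the leaves of $T'$ inward so that every such edge is still internal in the current forest when it is deleted; the vertex resulting from the contraction ends up as the center of a star with $(\lambda_i-1)+(\lambda_j-1)+1=\lambda_i+\lambda_j-1$ leaves, while every other $\mathcal{L}_m$ survives intact, so this path ends at a star forest with component multiset $\mu_{ij}$. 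Since it performs no dot-contractions, Remark~\ref{remark:paths-coefficients} shows it contributes $+1$ to $c_{\mu_{ij}}$, so $c_{\mu_{ij}}\neq 0$ (indeed $c_{\mu_{ij}}=1$, as by Proposition~\ref{prop:k-edge-adjacencies}(a) it equals the number of internal edges with leaf-component endpoints of orders $\lambda_i$ and $\lambda_j$, which is $1$ since $\mathcal{L}_i,\mathcal{L}_j$ are the only leaf components of their orders). For the reverse direction, if $c_{\mu_{ij}}\neq 0$ then, since $\ell(\mu_{ij})=\ell(\lead)-1$ and $\mu_{ij}$ has no $1$s, Proposition~\ref{prop:k-edge-adjacencies}(a) applies and tells us $\lead-\mu_{ij}=\lmulti p,q\rmulti$ are the orders of two adjacent leaf components; comparing with $\lead-\mu_{ij}=\lmulti\lambda_i,\lambda_j\rmulti$ and using distinctness, $\mathcal{L}_i$ and $\mathcal{L}_j$ are adjacent, i.e.\ $v_iv_j\in E(T)$.

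\emph{Assembly and main obstacle.} All coefficients $c_{\mu_{ij}}$ are read off from $\csft$, and by Step 2 the set $\{v_iv_j:c_{\mu_{ij}}\neq 0\}$ is exactly $E(T')$; since $T'$ is a tree on the known labeled vertex set, it is thereby determined, and Step 1 then recovers $T$. (Consistency check: Propositions~\ref{prop:hook-coeff} and \ref{prop:k-edge-adjacencies} force exactly $k-1=\#I(T)=|c_{(n-1,1)}|$ of the $c_{\mu_{ij}}$ to be nonzero.) Once Theorem~\ref{Thm:lead} and Proposition~\ref{prop:k-edge-adjacencies} are available, the only point requiring real care is the forward direction of the Step 2 claim --- that a genuine internal edge $v_iv_j$ is actually witnessed by $c_{\mu_{ij}}\neq 0$ --- which I expect to be the main obstacle, since, as the discussion following Theorem~\ref{Thm:lead} notes (cf.\ Example~\ref{exa:DNC-tree}), pure-deletion paths in a DNC tree need not exist; this is handled by the mixed leaf-contract-then-delete path described above, whose realizability rests on processing the edges of $T'$ from the leaves inward.
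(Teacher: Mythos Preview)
Your proof is correct and follows exactly the approach the paper intends (though the paper, being an extended abstract, does not spell it out): Theorem~\ref{Thm:lead} recovers the leaf components, Proposition~\ref{prop:k-edge-adjacencies}(a) recovers which pairs are adjacent, and distinctness of the parts pins down the labeled contracted tree $T'$, from which $T$ is rebuilt. One small remark: your DNC-path construction for the forward direction is sound, but the ``leaves-inward'' ordering is unnecessary here --- since $\lead$ has no $1$s, every center $v_m$ carries at least one leaf edge, so any remaining edge $v_av_b$ stays internal regardless of deletion order (the obstruction you cite from the discussion after Theorem~\ref{Thm:lead} and Example~\ref{exa:DNC-tree} only arises in the presence of deep vertices).
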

\begin{example}\label{ex:proper-distinct-parts}
    For the sake of brevity, we only provide $\lead(\csft)$ and the coefficients of the partitions $\mu$ without 1s such that $\ell(\mu)=\ell(\lead(\csft))-1$. Consider $\csft$ with $\lead(\csft) = (9,7,6,5,4,3,2)$ and with the following coefficients indexed by partitions without 1s with $\ell(\lead(\csft))-1$ on the table on the left. From these data, we can reconstruct the tree $T$ on the right (edge adjacencies are colored red). 
    
\begin{minipage}{.4\textwidth}
\small{
\begin{tabular}{||c |c |c||} 
         \hline
         $\mu$ & $c_\mu$ & $E_\mu$ \\ [0.5ex] 
         \hline\hline
         $(16,6,5,4,3,2)$ & $1$ & $\lmulti 9,7 \rmulti$ \\ 
         \hline
         $(15,7,5,4,3,2)$ & $1$ & $\lmulti 9,6 \rmulti$ \\
         \hline
         $(11,9,7,4,3,2)$ & $1$ & $\lmulti 6,5 \rmulti$ \\
         \hline
         $(10,9,7,5,3,2)$ & $1$ & $\lmulti 6,4\rmulti$ \\
         \hline
         $(9,7,7,6,5,2)$ & $1$ &  $\lmulti 4,3 \rmulti$\\ 
         \hline
         $(9,7,6,6,5,3)$ & $1$ & $\lmulti 4,2 \rmulti$ \\
         \hline
    \end{tabular}}
\end{minipage}
\begin{minipage}{.4\textwidth}
    \begin{tikzpicture}[auto=center,thick, every node/.style={circle, fill=black, scale=0.35}]
            \node (c) at (0,0) {};
            \node (c1) at (-.38, 0.65) {};
            \node (c2) at (0, .75) {};
            \node (c3) at (.38, .65) {};
            \node (t) at (-1, 0) {};
            \node (t1) at (-1.5, 0.25) {};
            \node (t2) at (-1.5, -.25) {};
            \node (d) at (0, -.75) {};
            \node (d1) at (0, -1.25) {};
            \node (s) at (1.25,0) {};
            \node (s1) at (1, .65) {};
            \node (s2) at (1.5, .65) {};
            \node (s3) at (.87, -.65) {};
            \node (s4) at (1.6, -.65) {};
            \node (s5) at (1.25, -.75) {};
            \node (n) at (2.75, .75) {};
            \node (n1) at (2.8, 1.5) {};
            \node (n2) at (2.7, 0) {};
            \node (n3) at (2.5, 1.45){};
            \node (n4) at (2.25, 1.3) {};
            \node (n5) at (2, 1) {};
            \node (n6) at (3, 0.05) {};
            \node (n7) at (3.25, 0.2) {};
            \node (n8) at (3.5, 0.5){};
            \node (l) at (4.25, 1.5) {};
            \node (l1) at (4.8, 1.1) {};
            \node (l2) at (4.3, 2.1) {};
            \node (l3) at (4.7, 2) {};
            \node (l4) at (4.95, 1.5) {};
            \node (l5) at (3.9, 2) {};
            \node (l6) at (4.4, .9) {};
            \node (f) at (2.5, -.5) {};
            \node (f1) at (3.2, -.8) {};
            \node (f2) at (2.9, -1.1) {};
            \node (f3) at (3.1, -.4) {};
            \node (f4) at (2.5, -1) {};

            \draw (l1)--(l)--(l2);
            \draw (l3)--(l)--(l4);
            \draw (l5)--(l)--(l6);
            \draw[thick,red] (n) -- (l);
            \draw (c1)--(c)--(c2);
            \draw (c)--(c3);
            \draw (t1)--(t)--(t2);
            \draw (d)--(d1);
            \draw[thick,red] (t)--(c)--(d);
            \draw (s1)--(s)--(s2);
            \draw (s3)--(s)--(s4);
            \draw (s)--(s5);
            \draw[thick,red] (c)--(s);
            \draw (f1)--(f)--(f2);
            \draw (f3)--(f)--(f4);
            \draw[thick,red] (f) -- (s);
            \draw[thick,red] (s) -- (n);
            \draw (n1)--(n)--(n2);
            \draw (n3)--(n)--(n4);
            \draw (n6)--(n)--(n5);
            \draw (n7)--(n)--(n8);
            \node[fill=none, scale=2] at (-2.2,0) {$T=$};
        \end{tikzpicture}
\end{minipage}
\end{example}
    
\section{Trees of diameter $\leq 5$}

In this section, we show that trees of diameter at most five can be reconstructed from their CSF.  We give an explicit reconstruction algorithm 
using the leading partition which for these trees can have at most two 1s, the \emph{internal subgraph}, defined below, and 
Proposition \ref{prop:k-edge-adjacencies} on the \emph{adjacencies} between the leaf components of the tree.  In \cite{martin2008distinguishing}, the authors proved that one can compute the diameter of a tree from its CSF.  In \cite{ADOZ}, the authors showed that a subclass of trees of diameter at most five are distinguished from their CSF, providing a reconstruction algorithm. Here we show that \emph{all} trees of diameter at most five can be reconstructed from $\mathbf{X}_T$.

Trees of diameter $\leq 2$ are stars and there is only one such tree for a given number of vertices $k$, namely $St_k$. If $T$ has diameter 3, then $T$ is a bi-star with $\lead(\mathbf{X}_T) = (i,j)$, where $i$ and $j$ are the orders of its two leaf components, see Corollary \ref{cor:bi-stars-leading}. Therefore, in the remainder of this section we focus on trees with diameter 4 and 5.

Recall that a leaf component of a tree $T$ is a connected component of $T \setminus I(T)$. Define the \defn{internal degree} of a vertex $v$ as the number of internal edges having $v$ as an endpoint.
\begin{definition}
    Let $T$ be a tree and let $\{v_1, \ldots, v_l\}$ be the set of vertices of $T$ with internal degree strictly greater than 1. Let $L_i$ be the set of leaf-vertices that are incident to any $v_i$ for $1 \leq i \leq l$. Then, the \defn{internal subgraph} of $T$, $\mathcal{I}_T$, is the subgraph of $T$, with vertices $V(\mathcal{I}_T) = \{v_1, \ldots, v_l\} \cup L_1 \cup \cdots \cup L_l$ and all edges in $T$ with endpoints in $V(\mathcal{I}_T)$.
\end{definition}

\begin{example} A tree $T$ and its internal subgraph $\mathcal{I}_T$. The vertices in $T$ with internal degree greater than 1 are labeled $v_1,v_2$ and $v_3$. Their internal degrees are 3, 3, and 5, respectively.
\vskip 0in 
\centerline{    \begin{tikzpicture}[every node/.style={circle, fill=black, scale=0.4}, scale=0.7]    
        \node (a2) at (2, 3) {};
        \node (a4) at (4, 3) {};
        \node (a5) at (5, 2.75) {};
        \node (a52) at (5.5, 2.5) {};
        \node (a53) at (5.75, 2) {};
    
        \node (b0) at (0.4, 1.5) {};
        \node (b2) at (2,2) {};
        \node[fill=red] (b3) at (2.6, 1.75) {};
        \node[fill=red] (b32) at (3.4, 1.75) {};
        \node (b4) at (4, 2) {};
        \node (b5) at (4.75, 1.75) {};
        
        \node (c0) at (0.25,1) {};
        \node (c1) at (1,1) {};
        \node[fill=red] (c2) at (2,1) {};
        \node[fill=none, scale=1.5] at (1.75, 1.25) {$v_1$};
        \node[fill=red] (c3) at (3,1) {};
        \node[fill=none, scale=1.5] at (3.25, .75) {$v_2$};
        \node[fill=red] (c4) at (4,1) {};
        \node[fill=none, scale=1.5] at (3.75, 1.25) {$v_3$};
        \node (c5) at (5,1) {};
        \node (c6) at (6,1) {};
        
        \node (d0) at (0.4, 0.5) {};
        \node[fill=red] (d2) at (1.6, 0.4) {};
        \node[fill=red] (d22) at (2, 0.25) {};
        \node[fill=red] (d23) at (2.4, 0.4) {};
        \node (d3) at (3, 0) {};
        \node[fill=red] (d4) at (3.6, 0.25) {};
        \node (d5) at (4.75, 0.25) {};
        
        \node (e3) at (2.6, -0.75) {};
        \node (e32) at (3.4, -0.75) {};
        \node (e5) at (5.5, -0.5) {};
        
        \draw[thick] (c1) -- (c2);
        \draw[color=red, thick] (c3) -- (b3);
        \draw[color=red, thick] (c3) -- (b32);
        \draw[color=red, thick] (c2) -- (c3);
        \draw[color=red, thick] (c3) -- (c4);
        \draw[thick] (c4) -- (c5);
        \draw[thick] (c5) -- (c6);
        
        \draw[thick] (c2) -- (b2);
        \draw[thick] (b2) -- (a2);
        \draw[thick] (c4) -- (b4);
        \draw[thick] (b4) -- (a4);
        \draw[thick] (c0) -- (c1);
        \draw[thick] (c1) -- (b0);
        \draw[thick] (c1) -- (d0);
        \draw[color=red, thick] (c2) -- (d2);
        \draw[color=red, thick] (c2) -- (d22);
        \draw[color=red, thick] (c2) -- (d23);
        \draw[thick] (c3) -- (d3);
        \draw[thick] (d3) -- (e3);
        \draw[thick] (d3) -- (e32);
        \draw[color=red, thick] (c4) -- (d4);
        \draw[thick] (c4) -- (b5);
        \draw[thick] (c4) -- (d5);
        \draw[thick] (d5) -- (e5);
        \draw[thick] (b5) -- (a5);
        \draw[thick] (b5) -- (a52);
        \draw[thick] (b5) -- (a53);

    \node[fill=red] (bb3) at (10.6, 1.75) {};
    \node[fill=red] (bb32) at (11.4, 1.75) {};
    \node[fill=red] (cc2) at (10,1) {};
        \node[fill=none, scale=1.5] at (9.75, 1.25) {$v_1$};
        \node[fill=red] (cc3) at (11,1) {};
        \node[fill=none, scale=1.5] at (11.25, .75) {$v_2$};
        \node[fill=red] (cc4) at (12,1) {};
        \node[fill=none, scale=1.5] at (11.75, 1.25) {$v_3$};
        \node[fill=red] (dd2) at (9.6, 0.4) {};
        \node[fill=red] (dd22) at (10, 0.25) {};
        \node[fill=red] (dd23) at (10.4, 0.4) {};
        \node[fill=red] (dd4) at (11.6, 0.25) {};
        \draw[color=red, thick] (cc3) -- (bb3);
        \draw[color=red, thick] (cc3) -- (bb32);
        \draw[color=red, thick] (cc2) -- (cc3);
        \draw[color=red, thick] (cc3) -- (cc4);
        \draw[color=red, thick] (cc2) -- (dd2);
        \draw[color=red, thick] (cc2) -- (dd22);
        \draw[color=red, thick] (cc2) -- (dd23);
        \draw[color=red, thick] (cc4) -- (dd4);
        \node[fill=none, scale=2.5] at (-1.2,1) {$T=$};
        \node[fill=none, scale=2.5] at (8.5,1) {$\mathcal{I}_T=$};
    \end{tikzpicture}}
    \label{fig:internal-subgraph}
\end{example}
\vskip 0in 
In \cite{GOT-CSF}, we showed that the internal subgraph of a tree is always connected. 
\vskip 0in
\subsection{Leaf components in the internal subgraph} From $\lead(\mathbf{X}_T)$ for a tree $T$, we can recover the orders of the leaf components of $T$. When $T$ has diameter 4, only one of these leaf components is in $\mathcal{I}_T$; and if $T$ has diameter 5, then $\mathcal{I}_T$ contains only two leaf components of $T$. In the case where $T$ has diameter at most 5, we can find from $\mathbf{X}_T$ the orders of the leaf components in $T$ that are also in $\mathcal{I}_T$. Leaf components of order 1 in $T$ must be in $\mathcal{I}_T$. Hence, we concentrate on leaf components of order greater than 1.

We now define a number that will help us determine the orders of the leaf components of $T$ that are contained in $\mathcal{I}_T$. If $\csft=\sum_\lambda c_\lambda \stfrak_\lambda$ is the CSF of a tree $T$ of order $n$ and $p$ is a part in $\lead(\csft)$, then define the quantity:
\begin{equation}\label{eq:N(p)}
    N(p) := \sum m_p(E_\mu) \cdot c_\mu ~,
\end{equation}   
where the sum runs over all $\mu\vdash n$ of length $\ell(\lead(\csft))-1$ such that $c_\mu \neq 0$ in $\csft$ and such that $\mu$ does not contain 1 as a part. Recall $m_p({E_\mu})$ is the multiplicity of $p$ in $E_\mu$. From Equation (\refeq{eq:N(p)}), we observe that $N(p)$ can be computed from $\mathbf{X}_T$. 

\begin{remark}\label{remark:n(p)-interpretation}
    If $\lead(\csft)$ contains no 1s, then Proposition \ref{prop:k-edge-adjacencies}(a) implies that $\mathcal{E}_T=\bigsqcup_\mu E_\mu$, where the union is taken over all $\mu \vdash n$ of length $\ell(\lead(\csft))-1$ such that $c_\mu\neq 0$ in $\csft$ and $\mu$ contains no 1s, contains the adjacencies between leaf components in $T$. Further, the multiplicity of $E_\mu$ in $\mathcal{E}_T$ is $c_\mu$.
    Hence, $N(p)$ as defined above is exactly the number of times that a leaf component of order $p$ occurs as a leaf component endpoint in $T$. 
\end{remark}

\begin{theorem}[\cite{GOT-CSF}, Theorem 5.19]\label{thm:leaf-component-order}
    Let $\csft$ be the CSF of a tree $T$ of order $n$ with no deep vertices, and let $\lead(\csft) = (n^{m_n}, \ldots, 2^{m_2})$ be the leading partition. If $p$ is any part of $\lead(\csft)$, then a leaf component of order $p$ in $T$ is contained in $\mathcal{I}_T$ if and only if $N(p) > m_p(\lead(\csft))=m_p$.
\end{theorem}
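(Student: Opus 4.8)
The plan is to recast both sides of the claimed equivalence as statements about internal vertices of $T$ and their internal degrees, using the hypothesis that $T$ has no deep vertices; the genuinely substantive input (the combinatorial meaning of the coefficients $c_\mu$) is already packaged in Remark~\ref{remark:n(p)-interpretation} and Proposition~\ref{prop:k-edge-adjacencies}.

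First I would record the structural picture. By Theorem~\ref{Thm:lead}, $\lead(\csft)=\lc(T)$, and since the number of $1$s of $\lc(T)$ equals the number of deep vertices, $\lead(\csft)$ has no part equal to $1$. If $T$ has no internal edge then $T$ is a star $St_n$, so $\lead(\csft)=(n)$, the sum defining $N(n)$ is empty, $N(n)=0\le 1=m_n$, and $\mathcal{I}_T$ has no vertices; both sides of the equivalence are false, so the statement holds. So assume $T$ has an internal edge. The internal vertices of a tree induce a connected subtree, which here has at least two vertices, so every internal vertex has internal degree at least $1$. Since $T$ has no deep vertices, each internal vertex $v$ is incident to a leaf edge, hence the component of $v$ in $T\setminus I(T)$ is the star made of $v$ together with all leaves adjacent to $v$; write $o(v):=\deg(v)-(\text{internal degree of }v)$ for its order. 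Every leaf lies in the component of its unique (internal) neighbour, so these stars are precisely the leaf components of $T$; in particular $m_p$ is the number of internal vertices $v$ with $o(v)=p$.

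Next I would compute $N(p)$. Because $\lead(\csft)$ has no $1$s, Remark~\ref{remark:n(p)-interpretation} (via Proposition~\ref{prop:k-edge-adjacencies}(a)) says $N(p)$ counts, with multiplicity over the internal edges of $T$, the occurrences of an order-$p$ leaf component as a leaf-component endpoint. An internal edge $e$ has the leaf component centred at $v$ as an endpoint exactly when $v\in e$, so that leaf component is an endpoint of exactly $(\text{internal degree of }v)$ internal edges. Hence
\[
N(p)=\sum_{v\,:\,o(v)=p}(\text{internal degree of }v),\qquad\text{so}\qquad N(p)-m_p=\sum_{v\,:\,o(v)=p}\bigl((\text{internal degree of }v)-1\bigr)\ \ge\ 0,
\]
each summand being nonnegative by the previous step. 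Thus $N(p)>m_p$ if and only if some internal vertex $v$ with $o(v)=p$ has internal degree at least $2$. Finally I would identify when the leaf component centred at such a $v$ lies in $\mathcal{I}_T$: if $v$ has internal degree at least $2$ it is one of the vertices $v_1,\dots,v_l$ defining $\mathcal{I}_T$, and all leaves adjacent to $v$ lie in the corresponding $L_i$, so every vertex, and hence every edge, of the leaf component at $v$ lies in $\mathcal{I}_T$; if $v$ has internal degree at most $1$ then $v\notin\{v_1,\dots,v_l\}$, and $v$ lies in no $L_i$ (it is not a leaf), so $v\notin V(\mathcal{I}_T)$ and the leaf component at $v$ is not contained in $\mathcal{I}_T$. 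Combining the last two facts yields: some leaf component of order $p$ is contained in $\mathcal{I}_T$ if and only if $N(p)>m_p$.

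The remaining work is bookkeeping; the one point requiring care is letting the ``no deep vertices'' hypothesis do its job, namely that it forces each internal vertex to be the centre of exactly one leaf component (a star), so that both $N(p)$ and $m_p$ are read off the multiset of internal-vertex orders and ``contained in $\mathcal{I}_T$'' becomes precisely ``the centre has internal degree $>1$''. One also has to dispatch the edgeless case separately, since there $N(p)-m_p$ can be negative and the identification of $N(p)$ with a sum of internal degrees degenerates.
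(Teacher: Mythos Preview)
The paper is an extended abstract and does not prove this theorem here; it merely states it with a citation to the full version \cite{GOT-CSF}. So there is no in-paper proof to compare against.

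That said, your argument is correct and is essentially the natural one. With no deep vertices, every internal vertex is the centre of exactly one leaf component, giving a bijection between internal vertices and leaf components; then $m_p$ counts the internal vertices whose star has order $p$, while by Remark~\ref{remark:n(p)-interpretation} the quantity $N(p)$ is the sum of their internal degrees. The difference $N(p)-m_p=\sum_{o(v)=p}((\text{internal degree of }v)-1)$ is a sum of nonnegative terms, so $N(p)>m_p$ holds precisely when some centre of an order-$p$ leaf component has internal degree at least $2$, which is exactly the condition for that leaf component to lie in $\mathcal{I}_T$. The star base case is handled correctly.

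One small slip worth fixing: your formula $o(v)=\deg(v)-(\text{internal degree of }v)$ gives the number of \emph{leaves} adjacent to $v$, not the order of the leaf component at $v$; the order is that quantity plus one (the centre $v$ itself). This does not affect the logic, since you only use $o(v)$ as a label for the leaf component that $v$ centres, never the explicit formula.
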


\subsubsection{Reconstruction of trees of diameter 4.} \label{subsec:diameter 4}
The following theorem is a consequence of Theorem \ref{Thm:lead}, which says that the leading partition gives us the orders of all the leaf components of $T$, and the fact that $\mathcal{I}_T$ has only one leaf component, $\mathcal{L}$, when $T$ has diameter four \cite[Corollary 5.7]{GOT-CSF}, we can recover the order of $\mathcal{L}$ from Theorem \ref{thm:leaf-component-order} or from $\lead(\csft)$ if $m_1(\lead(\csft)) =1$.

\begin{theorem}\label{thm:diam-4-reconstruction}
    Trees of diameter four can be reconstructed from their CSF. In particular, these trees can be reconstructed from the leading partition, $\lead(\csft)$, and the nonzero coefficients $c_\mu$, indexed by partitions $\mu$ such that $\ell (\mu) = \ell(\lead(\csft))-1$ and $\mu$ has no parts of size 1. 
\end{theorem}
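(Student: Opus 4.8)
The plan is to first pin down the rigid shape of a diameter-four tree, and then read off its defining data from $\csft$ using Theorems~\ref{Thm:lead} and~\ref{thm:leaf-component-order} together with Remark~\ref{remark:n(p)-interpretation}.

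First I would record the structure of a tree $T$ of diameter exactly $4$: it has a unique center vertex $c$, and writing $a_0-1\ge 0$ for the number of leaves attached directly to $c$ and $v_1,\dots,v_k$ for the non-leaf neighbors of $c$, each $v_i$ carries $a_i-1\ge 1$ pendant leaves and $k\ge 2$ (if $k\le 1$ the diameter is at most $3$). Consequently the internal edges of $T$ are exactly $cv_1,\dots,cv_k$, the leaf components of $T$ are the star at $c$ (of order $a_0$) together with the stars at the $v_i$ (of orders $a_1,\dots,a_k$), and $T$ is determined up to isomorphism by the integer $a_0$ and the multiset $\{a_1,\dots,a_k\}$. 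By Theorem~\ref{Thm:lead}, $\lead(\csft)=\lc(T)$ is the partition with parts $a_0,a_1,\dots,a_k$, so from $\csft$ I recover this multiset of parts; the entire problem reduces to singling out which part is $a_0$.

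Next I would split on the multiplicity $m_1$ of $1$ in $\lead(\csft)$. Every $v_i$ is incident to a leaf edge (as $a_i\ge 2$), while $c$ is incident to a leaf edge if and only if $a_0\ge 2$; since the number of $1$s in $\lc$ equals the number of deep vertices, this forces $m_1\in\{0,1\}$ with $m_1=1$ exactly when $a_0=1$. If $m_1=1$ then $a_0=1$ and the arm orders are the remaining parts of $\lead(\csft)$, which finishes this case. If $m_1=0$ then $T$ has no deep vertices, so Theorem~\ref{thm:leaf-component-order} applies and says that a leaf component of order $p$ lies in $\calit$ iff $N(p)>m_p(\lead)$. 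By Remark~\ref{remark:n(p)-interpretation}, $N(p)$ counts the occurrences of a leaf component of order $p$ as a leaf-component endpoint of an internal edge; since the internal edges are $cv_1,\dots,cv_k$ with leaf-component endpoint pairs $\lmulti a_0,a_i\rmulti$, I obtain $N(p)=k\,[p{=}a_0]+\#\{i:a_i=p\}$ while $m_p(\lead)=[p{=}a_0]+\#\{i:a_i=p\}$, hence $N(p)-m_p(\lead)=(k-1)\,[p{=}a_0]$, which is positive exactly when $p=a_0$ because $k\ge 2$. This pins down $a_0$ from $\lead(\csft)$ and the coefficients $c_\mu$ entering the definition of $N(p)$.

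Finally, knowing $a_0$ and the multiset of arm orders (namely $\lead(\csft)$ with one copy of $a_0$ removed), I would rebuild $T$ explicitly: take a center $c$ with $a_0-1$ pendant leaves and, for each arm order $a_i$, join to $c$ a fresh vertex $v_i$ carrying $a_i-1$ pendant leaves. Every ingredient used is either $\lead(\csft)$ or a coefficient $c_\mu$ with $\ell(\mu)=\ell(\lead(\csft))-1$ and $\mu$ free of $1$s, so the theorem follows. I expect the only delicate points to be verifying the rigidity of diameter-four trees — in particular that $k\ge 2$, which is exactly what makes $k-1>0$ and the criterion sharp — and checking that the hypothesis of Theorem~\ref{thm:leaf-component-order} (no deep vertices) genuinely holds in the case $m_1=0$; once these are in place the argument is a direct assembly of the quoted results.
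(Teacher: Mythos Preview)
Your proposal is correct and follows essentially the same route as the paper: split on $m_1(\lead(\csft))\in\{0,1\}$, handle $m_1=1$ directly since then the central leaf component has order $1$, and in the $m_1=0$ case invoke Theorem~\ref{thm:leaf-component-order} to identify the order of the unique leaf component in $\calit$. Your explicit computation $N(p)-m_p(\lead)=(k-1)[p{=}a_0]$ is a nice addition that makes the uniqueness of the identified part transparent, but structurally the argument is the same as the paper's.
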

 In fact, we can give an algorithm to reconstruct trees of diameter 4. Given $\csft$ in the star basis, the leading partition $\lead(\csft)$ gives the orders of the leaf-components of $T$. Since $T$ has diameter 4, $\mathcal{I}_T$ is a single leaf component and $m_1(\lead(\csft)) \leq 1$.
\begin{itemize}
    \item If $\lead(\csft)=(\lambda_1, \ldots, \lambda_\ell)$ and $m_1 =1$, then $\mathcal{I}_T$ is a single vertex, $v$, then $T$ is the tree obtained by adding an edge from $v$ the center of a star of order $\lambda_i$ for all $1\leq i < \ell$.
    \item If $\lead(\csft)=(\lambda_1, \ldots, \lambda_\ell)$ and $m_1=0$, then:
    \begin{itemize}
        \item Use Theorem \ref{thm:leaf-component-order} to determine the order, $\lambda_j$, of the leaf component in $\mathcal{I}_T$, where $1\leq j \leq \ell$. 
        \item Then add an edge from the central vertex of a star of order $\lambda_i$, for $i\neq j$, to the central vertex of $\mathcal{I}_T$.
    \end{itemize}
\end{itemize}

\begin{example}\label{ex:diam-4-reconstruction}
Consider the following CSF on a tree with 17 vertices.
{\small \begin{eqnarray*}
    \csft &=&  \textcolor{blue}{\stfrak_{(5, 4, 3, 3, 2)}}- \stfrak_{(5, 5, 3, 3, 1)} - 2\stfrak_{(6, 5, 3, 2, 1)} + \textcolor{red}{\stfrak_{(6, 5, 3, 3)}} + 2\stfrak_{(7, 5, 3, 1, 1)} + \textcolor{red}{2\stfrak_{(7, 5, 3, 2)}}  - \stfrak_{(8, 3, 3, 2, 1)} +
    \\
    && \stfrak_{(8, 5, 2, 1, 1)} - 4\stfrak_{(8, 5, 3, 1)} + \stfrak_{(9, 3, 3, 1, 1)} + \textcolor{red}{\stfrak_{(9, 3, 3, 2)}} - \stfrak_{(9, 5, 1, 1, 1)} - 2\stfrak_{(9, 5, 2, 1)} + 2\stfrak_{(9, 5, 3)}+
    \\
    && 2\stfrak_{(10, 3, 2, 1, 1)} - 2\stfrak_{(10, 3, 3, 1)} + 3\stfrak_{(10, 5, 1, 1)} + \stfrak_{(10, 5, 2)} - 2\stfrak_{(11, 3, 1, 1, 1)} - 4\stfrak_{(11, 3, 2, 1)} + \stfrak_{(11, 3, 3)}
    \\
    && - 3\stfrak_{(11, 5, 1)} - \stfrak_{(12, 2, 1, 1, 1)} + 6\stfrak_{(12, 3, 1, 1)}+ 2\stfrak_{(12, 3, 2)} + \stfrak_{(12, 5)}+ \stfrak_{(13, 1, 1, 1, 1)}+ 3\stfrak_{(13, 2, 1, 1)}
    \\
     &&   - 6\stfrak_{(13, 3, 1)}- 4\stfrak_{(14, 1, 1, 1)}
     - 3\stfrak_{(14, 2, 1)} + 2\stfrak_{(14, 3)}+ 6\stfrak_{(15, 1, 1)}  + \stfrak_{(15, 2)} - 4\stfrak_{(16, 1)} +\stfrak_{(17)} 
\end{eqnarray*} }

As shown in \cite{martin2008distinguishing}, we know that $T$ has diameter four from $\mathbf{X}_T$. We have $\lead(\csft) = (5,4,3,3,2)$, colored blue. The terms indexed by partitions of length $\ell(\lead(\csft))-1$ without 1s are colored red. These terms induce the adjacency multisets $\lmulti 5,4\rmulti, \lmulti 4, 3\rmulti$ (twice) and $\lmulti 4, 2\rmulti$. We have $N(4)=4>1=m_4$, which by Theorem \ref{thm:leaf-component-order} implies that the leaf component in $T$ contained in $\mathcal{I}_T$ has order $4$. Then, we draw all the remaining leaf components and connect them to the leaf component of order 4 in $\mathcal{I}_T$. 

\centerline{
    \begin{tikzpicture}[auto=center,every node/.style={circle, fill=black,scale=0.4}, thick, scale=0.6]
        
        \node (A1) at (7,0) {};
        \node (A2) at (7,0.75) {};
        \node (A3) at (7,-0.75) {};
        \node (A4) at (7.75,0) {};
        \node (A5) at (6.25,0) {};
        
        \draw(A1) -- (A2);
        \draw(A1) -- (A3);
        \draw(A1) -- (A4);
        \draw(A1) -- (A5);
        
        \node[fill=red] (B1) at (8.5, 0) {};
        \node[fill=red] (B2) at (8.5, 0.75) {};
        \node[fill=red] (B3) at (8.5, -0.75) {};
        \node[fill=red] (B4) at (9.25, 0) {};
        
        \draw[red](B1) -- (B2);
        \draw[red](B1) -- (B3);
        \draw[red](B1) -- (B4);
        
        \node (C1) at (10,0) {};
        \node (C2) at (10,0.75) {};
        \node (C3) at (10,-0.75) {};
        
        \draw(C1) -- (C2);
        \draw(C1) -- (C3);
        
        \node (D1) at (10.75,0) {};
        \node (D2) at (10.75,0.75) {};
        \node (D3) at (10.75,-0.75) {};
        
        \draw(D1) -- (D2);
        \draw(D1) -- (D3);
        
        \node (E1) at (11.5,0) {};
        \node (E2) at (11.5,0.75) {};
        
        \draw(E1) -- (E2);
        
        \draw[-stealth, line width=1pt](12.25,0) -- (14.25,0);
        
        \node[fill=red] (A1) at (16,0) {};
        \node[fill=red] (A2) at (16.75,0) {};
        \node[fill=red] (A3) at (15.25,0) {};
        \node[fill=red] (A4) at (16,0.75) {};
        
        \draw[red](A1) -- (A2);
        \draw[red](A1) -- (A3);
        \draw[red](A1) -- (A4);
        
        \node (B1) at (17,1) {};
        \node (B2) at (17.25,1.75) {};
        \node (B3) at (17.75,1.25) {};
        
        \draw(B1) -- (B2);
        \draw(B1) -- (B3);

        \node (C1) at (15,1) {};
        \node (C2) at (14.75, 1.75) {};
        \node (C3) at (14.25, 1.25){};

        \draw(C1) -- (C2);
        \draw(C1) -- (C3);

        \node (D1) at (17,-1) {};
        \node (D2) at (17.53, -1.53){};
        
        \draw(D1) -- (D2);
        
        \node (E1) at (15,-1){};
        \node (E2) at (14.75,-1.6) {};
        \node (E3) at (14.4, -1.2) {};
        \node (E4) at (15.25, -1.5) {};
        \node (E5) at (14.5, -0.75) {};
        
        \draw(E1) -- (E2);
        \draw(E1) -- (E3);
        \draw(E1) -- (E4);
        \draw(E1) -- (E5);
        \draw(A1) -- (B1);
        \draw(A1) -- (C1);
        \draw(A1) -- (D1);
        \draw(A1) -- (E1);    
    \end{tikzpicture}}
\end{example}

\subsubsection{Trees of diameter 5}

\begin{lemma}\label{lemma: diameter of T1 and T2}
    Let $T$ be a tree with diameter five and internal subgraph $\mathcal{I}_T$. Let $e=u_1u_2$ be the edge between the two internal vertices $u_1$  and  $u_2$ contained in $\mathcal{I}_T$. Let $T\setminus e = T_1 \sqcup T_2$ where $T_1$ and $T_2$ are the trees containing $u_1$ and $u_2$, respectively. Then both $T_1$ and $T_2$ have diameter at most 4.
\end{lemma}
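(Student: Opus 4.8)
The plan is to argue by contradiction: suppose, without loss of generality, that $T_1$ has diameter at least $5$, and derive that $T$ has diameter at least $6$, contradicting the hypothesis. First I would recall the structure of a tree of diameter five: it has exactly two vertices of internal degree strictly greater than $1$ (this is why $\mathcal{I}_T$ contains precisely the edge $e=u_1u_2$, as asserted in the statement and established in \cite{GOT-CSF}). I would fix a longest path $P$ in $T_1$; say $|P| = \diam(T_1)+1 \geq 6$ vertices, with endpoints $a$ and $b$. Both $a$ and $b$ are leaves of $T_1$, hence each is either a leaf of $T$ or equals $u_1$ (the unique vertex of $T_1$ that could have had its degree reduced by deleting $e$).

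The key step is to locate $u_1$ on or near this path $P$ and then extend $P$ through $e$ into $T_2$. Since $T_2$ is nonempty and $u_2$ has positive degree in $T_2$, $T_2$ contains at least one edge $u_2 w$, so there is a path of length at least $1$ starting at $u_2$ inside $T_2$; in fact, because $u_2$ is an internal vertex of $T$ with internal degree $\geq 1$ and $T$ has diameter $5 \geq 4$, one checks $T_2$ contains a path of length at least $2$ from $u_2$. Now I would show that in $T_1$ there is a path of length at least $4$ from $u_1$ to a leaf of $T$: if $u_1$ lies in the ``middle'' of $P$, one of the two halves of $P$ from $u_1$ has length $\geq \lceil \diam(T_1)/2\rceil \geq 3$, and I would bump this up using the fact that $T_1$ has at most one branch vertex other than $u_1$, so the longest path from $u_1$ cannot be much shorter than a longest path in $T_1$; if $u_1$ is an endpoint of $P$ we get length $\geq 5$ directly. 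Concatenating a path of length $\geq 4$ in $T_1$ ending at $u_1$, the edge $e$, and a path of length $\geq 1$ in $T_2$ starting at $u_2$ gives a path in $T$ of length $\geq 6$, so $\diam(T) \geq 6$, a contradiction. By symmetry the same bound holds for $T_2$.

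I expect the main obstacle to be the careful bookkeeping in the case analysis on the position of $u_1$ relative to a longest path of $T_1$, specifically ruling out the possibility that deleting $e$ destroyed a diameter-realizing path of $T$ in a way that leaves $T_1$ with a long path that nonetheless does not pass near $u_1$. The clean way to handle this is to use the ``two-branch-vertex'' structure of diameter-five trees from \cite{GOT-CSF}: every vertex of $T$ other than $u_1, u_2$ lies within distance $2$ of $\{u_1,u_2\}$ (each leaf component is a star attached directly to $u_1$ or $u_2$, or the leaf components adjacent to the $u_i$ are at distance $1$), which forces any long path in $T_1$ to run through $u_1$ with at least $\lceil 4/2\rceil=2$ on one side plus the star depth, and a short computation then shows the $u_1$-to-leaf distance in $T_1$ is at least $4$ whenever $\diam(T_1)\geq 5$. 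Once that local structure is in hand, the concatenation argument and the symmetric treatment of $T_2$ are routine.
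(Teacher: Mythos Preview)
The extended abstract does not include a proof of this lemma; it is stated and then used, with the details deferred to \cite{GOT-CSF}. So there is no in-paper argument to compare against, and I will simply assess your proposal.

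Your overall strategy---assume $\diam(T_1)\geq 5$, find a long path in $T_1$ reaching $u_1$, extend through $e$ into $T_2$, and contradict $\diam(T)=5$---is sound. However, the execution you sketch is considerably more complicated than necessary, and the case analysis on the position of $u_1$ relative to a longest path $P$ of $T_1$ (``bump this up using the fact that $T_1$ has at most one branch vertex other than $u_1$'') is both vague and superfluous. You also omit the case where $u_1$ does not lie on $P$ at all.

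The clean route is already hiding in your final paragraph. Since $u_2$ has internal degree at least $2$ in $T$ (by the definition of $\mathcal{I}_T$), there is an internal edge $u_2v$ with $v\neq u_1$; as $v$ is not a leaf it has a neighbour $w\neq u_2$, so there is a vertex $w\in T_2$ with $d_T(u_2,w)=2$. Now if some $z\in T_1$ satisfied $d_T(u_1,z)\geq 3$, then
\[
d_T(z,w)\;=\;d_T(z,u_1)+1+d_T(u_2,w)\;\geq\;3+1+2\;=\;6,
\]
contradicting $\diam(T)=5$. Hence every vertex of $T_1$ lies within distance $2$ of $u_1$, so $\diam(T_1)\leq 4$; by symmetry the same holds for $T_2$. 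This avoids the longest-path case analysis entirely and does not need to first establish the full ``distance $\leq 2$ from $\{u_1,u_2\}$'' structural description of diameter-five trees---that description is in fact a \emph{consequence} of the displayed inequality, not a prerequisite. I would replace the middle of your proposal with this three-line argument.
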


\begin{proposition}\cite[Proposition 5.26]{GOT-CSF}\label{prop:xt1 and xt2}
    Let $\csft$ be the CSF of a tree $T$ with diameter five and $\lead = \lead(\csft)=(n^{m_n}, \ldots, 1^{m_1})$. Let $e$ be the internal edge in $\mathcal{I}_T$, the internal subgraph of $T$, and let $T\setminus e = T_1 \sqcup T_2$. Then, $\mathbf{X}_{T_1}\mathbf{X}_{T_2}$ can be recovered from $\csft$. Further, $\#V(T_1)$ and $\#V(T_2)$ can be recovered from $\csft$.
\end{proposition}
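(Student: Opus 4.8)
The plan is to apply the DNC relation (Proposition~\ref{prop:dnc-formula}) to the internal edge $e=u_1u_2$ of $\calit$. Since $e$ is internal, the relation rearranges to
\[
\mathbf{X}_{T_1}\mathbf{X}_{T_2} \;=\; \mathbf{X}_{T\setminus e} \;=\; \csft + \mathbf{X}_{(T\odot e)\setminus\ell_e} - \mathbf{X}_{T\odot e},
\]
using $T\setminus e = T_1\sqcup T_2$ and multiplicativity of $\mathbf{X}$ over disjoint unions. So it suffices to show that $\csft$ determines the two forests $T\odot e$ and $(T\odot e)\setminus\ell_e$, and hence their chromatic symmetric functions.

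The structural input is the shape of a diameter-five tree. Since $\calit$ contains exactly two leaf components of $T$, there are exactly two vertices $u_1,u_2$ of internal degree $>1$, and one checks directly from the definition of $\calit$ that every neighbour of $u_i$ other than $u_{3-i}$ is either a leaf or the centre of a pendant star (a vertex of degree $\ge 2$ all of whose remaining edges are leaf edges). Hence the leaf components of $T$ are one of order $p$ centred at $u_1$, one of order $q$ centred at $u_2$ (with $p=1$ when $u_1$ is deep and $q=1$ when $u_2$ is deep), and the pendant stars, each of order $\ge 2$ and each adjacent to exactly one of $u_1,u_2$. Contracting $e$ merges the two $\calit$-components into a single star of order $p+q$ and leaves the pendant stars untouched; so, writing $\widehat T$ for $T$ with $e$ contracted, both $\widehat T$ and $T\odot e$ are trees of diameter at most four, obtained by attaching the centres of the pendant stars of $T$ to the centre of one further star, of order $p+q-1$ for $\widehat T$ and $p+q$ for $T\odot e$. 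The multiset of leaf-component orders of $T$ is $\lead(\csft)=\lc(T)$ by Theorem~\ref{Thm:lead}, so both $\widehat T$ and $T\odot e$ are determined as soon as the two orders $p,q$ are known, and then so are $\mathbf{X}_{T\odot e}$ and $\mathbf{X}_{(T\odot e)\setminus\ell_e}=p_1\,\mathbf{X}_{\widehat T}$.

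So the crux is to recover $\{p,q\}$ from $\csft$; then $\mathbf{X}_{T_1}\mathbf{X}_{T_2}=\csft+p_1\mathbf{X}_{\widehat T}-\mathbf{X}_{T\odot e}$ follows at once. The number of $1$'s in $\lead(\csft)$ equals the number of deep vertices of $T$, which is $0$, $1$, or $2$ and counts how many of $u_1,u_2$ are deep. If it is $2$, then $p=q=1$. If it is $0$, then $p,q\ge 2$, and since $\calit$ contains exactly two leaf components of $T$, Theorem~\ref{thm:leaf-component-order} identifies $\{p,q\}$ as the multiset of parts $r$ of $\lead(\csft)$ with $N(r)>m_r(\lead(\csft))$. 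If it is $1$, say $p=1$, then $q$ is the order of the unique $\calit$-leaf component of order $\ge 2$; it lies among the leaf-component orders adjacent to the deep vertex, which are obtained from $\csft$ via Proposition~\ref{prop:k-edge-adjacencies}(b), and is singled out there using that the component centred at $u_2$, unlike a pendant star of $u_1$, is itself adjacent to further leaf components (so that only the choice $q'=q$ yields a symmetric function $\csft+p_1\mathbf{X}_{\widehat T'}-\mathbf{X}_{T\odot e'}$ consistent with being a product of two tree CSFs). I expect this last case — identifying $T\odot e$ and $\widehat T$ exactly in the presence of a deep vertex — to be the main obstacle; the other two cases are immediate.

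Finally, for the orders: each leaf component of $T$ lies wholly in $T_1$ or in $T_2$, so $\#V(T_i)$ is the sum of the orders of the components on its side, and which side each pendant star falls on is visible from the adjacency multisets $E_\mu$. Alternatively, $\mathsf{Sym}$ is a unique factorization domain and $\mathbf{X}_G$ is irreducible for every connected $G$ — under the algebra map $\mathsf{Sym}\to\Q[t]$ sending each $p_k$ to $t$, $\mathbf{X}_G$ maps to the chromatic polynomial $\chi_G(t)$, which for connected $G$ has a simple zero at $t=0$, whereas any positive-degree homogeneous factor would vanish there to higher order — so the recovered product $\mathbf{X}_{T_1}\mathbf{X}_{T_2}$ determines the multiset $\{\mathbf{X}_{T_1},\mathbf{X}_{T_2}\}$, the scalar ambiguity being removed since $\mathbf{X}_H$ has coefficient $1$ on $p_1^{\,|V(H)|}$ for every graph $H$; in particular this determines $\{\#V(T_1),\#V(T_2)\}$.
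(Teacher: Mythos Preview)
The paper does not include a proof of this proposition here; it is cited from \cite{GOT-CSF} and used as a black box in the proof of the diameter-five theorem. So there is no in-paper argument to compare against directly. Your DNC-based strategy is natural and, for the cases $m_1(\lead)\in\{0,2\}$, complete: when $m_1=2$ you have $p=q=1$ trivially, and when $m_1=0$ Theorem~\ref{thm:leaf-component-order} determines the multiset $\{p,q\}$ (two values $r$ with $N(r)>m_r$ give $\{p,q\}$ directly; a single such value $r$ forces $p=q=r$, since $\calit$ has exactly two leaf components). In those cases $T\odot e$ and $\widehat T$ are pinned down and the rearranged DNC relation yields $\mathbf{X}_{T_1}\mathbf{X}_{T_2}$. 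Your UFD/irreducibility argument for extracting $\{\#V(T_1),\#V(T_2)\}$ from the product is correct and pleasant.

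The genuine gap is the case $m_1=1$, which you flag yourself. Proposition~\ref{prop:k-edge-adjacencies}(b) only lists the orders of leaf components adjacent to the deep vertex $u_1$; it gives no way to single out the component centred at $u_2$ from a pendant star of $u_1$ of the same order. Your proposed ``consistency with being a product of two tree CSFs'' test is not made precise, and the most obvious version fails: under the specialization $p_k\mapsto t$, every candidate $\csft+p_1\mathbf{X}_{\widehat T'}-\mathbf{X}_{(T\odot e)'}$ maps to $t(t-1)^{n-1}+t^2(t-1)^{n-2}-t(t-1)^{n-1}=t^2(t-1)^{n-2}$ regardless of the choice of $q'$, so the chromatic-polynomial check cannot distinguish candidates. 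More tellingly, look at how the paper \emph{uses} this proposition in the subsequent theorem: it first obtains $\mathbf{X}_{T_1}\mathbf{X}_{T_2}$ and $N_1,N_2$ from the proposition, and only \emph{afterwards} recovers $p_1,p_2$ (via $\lead(\mathbf{X}_{T_i})$ and the adjacency multisets). That order of operations indicates that the proof in \cite{GOT-CSF} does not pass through determining $\{p,q\}$ first; your approach reverses that order, and the reversal is exactly where the argument stalls.
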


In \cite{GOT-CSF}, we prove the result below. However, we include here a shorter inductive proof that did not appear in \cite{GOT-CSF}.

\begin{theorem}
    Let $\csft$ be the CSF of a tree $T$ with diameter 5 and $\lead = \lead(\csft)=(n^{m_n}, \ldots, 1^{m_1})$. Then, $T$ can be reconstructed from $\csft$.
\end{theorem}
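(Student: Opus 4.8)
The plan is to induct on the diameter, or more precisely on the number of vertices, using Proposition~\ref{prop:xt1 and xt2} to reduce a diameter-five tree to two trees of smaller diameter. First I would fix $T$ of diameter five with internal subgraph $\mathcal{I}_T$ consisting of exactly two internal vertices $u_1, u_2$ joined by the unique internal edge $e$ of $\mathcal{I}_T$ (the fact that $\mathcal{I}_T$ has this shape for diameter-five trees should follow from the structural results cited before, together with the observation that $\mathcal{I}_T$ is connected). By Proposition~\ref{prop:xt1 and xt2} I can extract from $\csft$ the product $\mathbf{X}_{T_1}\mathbf{X}_{T_2}$ as well as the integers $\#V(T_1)$ and $\#V(T_2)$, where $T\setminus e = T_1 \sqcup T_2$ and $u_i \in T_i$. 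By Lemma~\ref{lemma: diameter of T1 and T2}, each $T_i$ has diameter at most four, so by the already-established reconstruction results for diameters $\le 4$ (stars, bi-stars, Theorem~\ref{thm:diam-4-reconstruction}), each $T_i$ is determined by its own CSF $\mathbf{X}_{T_i}$.

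The key step is therefore to recover the two factors $\mathbf{X}_{T_1}$ and $\mathbf{X}_{T_2}$ individually from their product $\mathbf{X}_{T_1}\mathbf{X}_{T_2}$, knowing $\#V(T_1)$ and $\#V(T_2)$. Here I would argue that the CSF of a tree of diameter $\le 4$ lies in a controlled family whose elements have distinct, identifiable leading behavior: using Theorem~\ref{Thm:lead}, $\lead(\mathbf{X}_{T_i}) = \lc(T_i)$, and one can read off $\lead(\mathbf{X}_{T_1}\mathbf{X}_{T_2})$ from $\lead(\mathbf{X}_{T_1})$ and $\lead(\mathbf{X}_{T_2})$ (the leading partition of a product of $\stfrak$-expansions is, up to sorting, the union of the leading partitions, since $\stfrak_\lambda \stfrak_\mu = \stfrak_{\lambda \cup \mu}$ and the lexicographically smallest term of a product is the product of the lexicographically smallest terms). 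Combined with $\#V(T_1)$, $\#V(T_2)$ and the diameter-$\le 4$ classification, one narrows down the finitely many possible $(\mathbf{X}_{T_1}, \mathbf{X}_{T_2})$ pairs; divisibility in the polynomial ring $\mathsf{Sym}$ (which is a UFD, with the $\stfrak_k$ among its irreducibles) then pins down the unordered pair $\{\mathbf{X}_{T_1}, \mathbf{X}_{T_2}\}$ uniquely. Finally, once $T_1$ and $T_2$ are reconstructed, I recover $T$ by joining them with the edge $e = u_1 u_2$; the vertices $u_1 \in T_1$ and $u_2 \in T_2$ are distinguished as the unique vertices of internal degree $>1$ in each piece (equivalently, the centers of the leaf components of $T_i$ that live in $\mathcal{I}_{T_i}$), so the gluing is canonical and $T$ is determined.

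I expect the main obstacle to be the factorization step: showing that $\mathbf{X}_{T_1}$ and $\mathbf{X}_{T_2}$ are uniquely recoverable from their product rather than merely from the product together with side information. The subtlety is that the $\stfrak$-basis is not multiplicative in an obvious way that interacts cleanly with the lexicographic order on all terms (only the leading term is easy), so I would lean on the unique factorization property of $\mathsf{Sym}$ together with the fact that each $\mathbf{X}_{T_i}$ is constrained to a short explicit list (spiders / double-brooms of the relevant orders) to brute-force the finitely many cases. A secondary technical point is verifying that the decomposition $T \setminus e = T_1 \sqcup T_2$ really is unambiguous --- i.e.\ that for a diameter-five tree there is a \emph{canonical} internal edge in $\mathcal{I}_T$ to delete --- which again rests on the structural description of $\mathcal{I}_T$ established earlier in \cite{GOT-CSF} and sketched in this section.
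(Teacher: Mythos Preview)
Your overall scaffolding matches the paper's: use Proposition~\ref{prop:xt1 and xt2} to extract $\mathbf{X}_{T_1}\mathbf{X}_{T_2}$ and the vertex counts, then exploit the diameter-$\le 4$ results on the pieces. But two of your steps do not go through as stated. First, the factorization step is only a sketch: appealing to unique factorization in $\mathsf{Sym}$ and ``brute-forcing finitely many cases'' is not a proof that the unordered pair $\{\mathbf{X}_{T_1},\mathbf{X}_{T_2}\}$ is determined. The paper sidesteps this entirely with a concrete trick: if $N_2=\#V(T_2)\ge \#V(T_1)$, then the smallest $\alpha$ (lexicographically) with $c_{(N_2,\alpha)}\neq 0$ in the product is exactly $\lead(\mathbf{X}_{T_1})$, because the part $N_2$ can only come from the term $\stfrak_{(N_2)}$ in $\mathbf{X}_{T_2}$. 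This yields $\lead(\mathbf{X}_{T_1})$ and then $\lead(\mathbf{X}_{T_2})=\lead-\lead(\mathbf{X}_{T_1})$, which is all the paper needs; it never attempts to recover the full factors.

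Second, and more seriously, your gluing step is wrong. You claim $u_i$ is the unique vertex of internal degree $>1$ in $T_i$ (equivalently the center of the leaf component in $\mathcal{I}_{T_i}$), but this fails whenever $T_i$ has diameter $\le 3$: a bi-star has \emph{no} vertex of internal degree $>1$, and $\mathcal{I}_{T_i}$ is empty. Concretely, take the two caterpillars whose leaf-component sequences along the spine are $(2,3,3,3)$ and $(3,2,3,3)$. Both have diameter five, both give the same unordered pair $\{T_1,T_2\}$ (a $(2,3)$ bi-star and a $(3,3)$ bi-star, with the same vertex counts $5$ and $6$), yet they are non-isomorphic. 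Your procedure cannot tell which center of the $(2,3)$ bi-star to glue at. The paper resolves exactly this ambiguity by going back to $\csft$ itself: it reads off the multiset $\lmulti p_1,p_2\rmulti$ of the orders of $\mathcal{L}_1,\mathcal{L}_2$ via Theorem~\ref{thm:leaf-component-order} (or directly from $m_1(\lead)$), and then uses the adjacency multisets of Proposition~\ref{prop:k-edge-adjacencies} to decide which of $p_1,p_2$ sits inside which $T_i$; the final observation is that when both candidate orders occur in both $\lead(\mathbf{X}_{T_i})$, the two possible gluings are either isomorphic or separated by their edge adjacencies. Without this use of the adjacency data from $\csft$, your reconstruction is genuinely ambiguous.
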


\begin{proof}
    Since $T$ has diameter 5, $m_1\leq 2$ and the internal subgraph $\calit$ consists of two leaf components $\mathcal{L}_1$ and $\mathcal{L}_2$ of orders $p_1$ and $p_2$, respectively, whose respective centers $u_1$ and $u_2$ are joined by an internal edge $e$. Let $T \setminus e = T_1 \sqcup T_2$, where $T_1$ contains $u_1$ and $T_2$ contains $u_2$. Then, by Proposition \ref{prop:xt1 and xt2}, we can recover the product $\mathbf{X}_{T_1}\mathbf{X}_{T_2}$ as well as $\#V(T_1)$ and $\#V(T_2)$. Let $N_1 = \#V(T_1)$ and $N_2 = \#V(T_2)$. If $\lead$ has only two parts greater than one, then $T$ is reconstructible by Corollary \ref{cor:bi-stars-leading}. Thus, we may assume that $\lead$ has at least three parts greater than one.

    Without loss of generality, we may assume $\lead(\mathbf{X}_{T_1}) \leq \lead(\mathbf{X}_{T_2})$ where $\leq$ is lexicographic order. Therefore, in the product $\mathbf{X}_{T_1}\mathbf{X}_{T_2}$, the smallest partition $\alpha$ in lexicographic order such that $c_{(N_2,\alpha)} \neq 0$ must be exactly $\alpha = \lead(\mathbf{X}_{T_1})$. Thus, we can recover the orders of the leaf components in $T_1$, and thus those in $T_2$ by taking the multiset difference with $\lead$. Hence, we can recover $\lead(\mathbf{X}_{T_1})$ and $\lead(\mathbf{X}_{T_2})$ from $\mathbf{X}_{T_1}\mathbf{X}_{T_2}$. Observe that we can tell whether $p_1 = p_2$ from either $\lead$ if $m_1(\lead)=2$ or from Theorem \ref{thm:leaf-component-order} if $m_1(\lead)=0$. In this case, since $p_1 = p_2$, we can reconstruct $T$ by adding an edge between the centers of $\mathcal{L}_1$ and $\mathcal{L}_2$, then adding edges from the centers of the other leaf components given by $\lead(\mathbf{X}_{T_1})-\{p_1\}$ to $\mathcal{L}_1$, and finally adding edges from the center of $\mathcal{L}_2$ to the centers of the components given by the mulitset difference $(\lead - \lead(\mathbf{X}_{T_1}))-\lmulti p_2\rmulti=\lead(\mathbf{X}_{T_2})-\lmulti p_2\rmulti$.
    
    Hence, assume $p_1 \neq p_2$. If $m_1(\lead) =1$, we consider the following cases. If $\ell(\lead(\mathbf{X}_{T_1}))=1$, then $T_1$ is a star on $k=|\lead(\mathbf{X}_{T_1})|$ vertices. Then, $p_1 =1$ and the only two leaf components adjacent to $\mathcal{L}_1$ have orders $k-1$ and $p_2$. In this case, we can find $p_2$ using adjacency multisets from Proposition \ref{prop:k-edge-adjacencies}(b). If $\ell(\lead(\mathbf{X}_{T_1})) \neq 1$, then either $p_1=1$ if $m_1(\lead(\mathbf{X}_{T_1}))=1$ and we can find $p_2$ again using adjacency multisets. Otherwise, we must have $p_2 = 1$. Since we recovered the orders of the leaf components in $T_2$, using the adjacencies from Proposition \ref{prop:k-edge-adjacencies}(b) we can find the value of $p_1$. Once we know the values of $p_1$ and $p_2$ the reconstruction of $T$ is the same as when $p_1 = p_2$.

    If $m_1(\lead) =0$, then by Theorem \ref{thm:leaf-component-order} we can find the multiset of values of the orders of $\mathcal{L}_1$ and $\mathcal{L}_2$, $\lmulti r, s \rmulti$. We need to determine if $p_2 = r$ or $s$. If only $r$ (or $s$) occurs in $\lead(\mathbf{X}_{T_1})$ (similarly for $\lead(\mathbf{X}_{T_2})$), then $p_1=r$ (or $s$) and $p_2=s$ (or $r$). Then, assume $\lead(\mathbf{X}_{T_1}) = \sort (r,s,a_1,\ldots,a_m)$ and $\lead(\mathbf{X}_{T_2}) = \sort (r,s, b_1,\ldots,b_k)$. Then, $T$ must be equal to one of the following trees (each leaf component is represented by its order):

\centerline{
    \begin{tikzpicture}[auto=center,every node/.style={circle, fill=none, draw=black, scale=0.6}, thick, scale=0.7]
        
        \node (a1) at (0,0) {$r$};
        \node (a2) at (1,0) {$s$};
        \node (b1) at (-1, 1) {$s$};
        \node (a0) at (-1.25,0.25) {$a_1$};
        \node[draw=none] at (-1.25, -.3) {\Large{$\vdots$}};
        \node (c1) at (-1.1, -1) {$a_m$};
        \node (d1) at (2, 1) {$r$};
        \node (b2) at (2.25, 0.25) {$b_1$};
        \node[draw=none] at (2.25,-.3) {\Large{$\vdots$}};
        \node (c2) at (2.1, -1) {$b_k$};
        
        \draw (a0) -- (a1);
        \draw (a1) -- (a2);
        \draw (a1) -- (b1);
        \draw (a1) -- (c1);        
        \draw (a2) -- (b2);
        \draw (a2) -- (c2);
        \draw (a2)--(d1);

        \draw[dashed] (3.85, 1.75) -- (3.85, -2);

        \node (a1) at (7,0) {$s$};
        \node (a2) at (8,0) {$r$};
        \node (b1) at (6, 1) {$r$};
        \node (e1) at (9, 1) {$s$};
        \node (a0) at (5.75,0.25) {$a_1$};
        \node[draw=none] at (5.75, -.3) {\Large{$\vdots$}};
        \node (c1) at (5.9, -1) {$a_m$};
        \node (b2) at (9.25, 0.25) {$b_1$};
        \node[draw=none] at (9.25,-.3) {\Large{$\vdots$}};
        \node (c2) at (9.1, -1) {$b_k$};
        
        \draw (a0) -- (a1);
        \draw (a1) -- (a2);
        \draw (a1) -- (b1);
        \draw (a1) -- (c1);        
        \draw (a2) -- (b2);
        \draw (a2) -- (c2);
        \draw (e1)--(a2);
    \end{tikzpicture}}

    But it is clear that if $\lead(\mathbf{X}_{T_1})\neq \lead(\mathbf{X}_{T_2})$, these trees have different edge adjencencies, otherwise they are isomorphic. 
\end{proof}



\printbibliography
\end{document}